\documentclass[10pt,a4paper]{article}
\usepackage{amssymb,amsmath,amsthm,bm}
\usepackage{graphicx}
\usepackage{geometry}
\usepackage{float}
\usepackage{tikz}
\usepackage{todonotes}
\graphicspath{{./figures/}}
\usetikzlibrary{decorations.pathreplacing}

\newtheorem{teo}            {Theorem}
\newtheorem{lema}     [teo]{Lemma}
\newtheorem{defin}[teo]{Definition}

\newtheorem{example} [teo]          {Example}
\newtheorem{obs} [teo]           {Remark}
\newtheorem{prop} [teo]       {Proposition}

\DeclareMathOperator{\Fix}{Fix}

\title{Wonderful models for generalized Dowling arrangements}
\author{Giovanni Gaiffi\footnote{Dipartimento di Matematica, Universit\`a di Pisa; \emph{E-mail address}:\texttt{gaiffi@dm.unipi.it}}, Viola Siconolfi\footnote{Dipartimento di Matematica, Universit\`a di Roma Tor Vergata; \emph{E-mail address}:\texttt{siconolf@mat.uniroma2.it}}}

\usepackage[font={small,bf}]{caption}
\usepackage{chngcntr}

\counterwithin{figure}{section}
\counterwithin{teo}{section}
\counterwithin{equation}{section}

\begin{document}
\maketitle
\begin{abstract}
For any triple given by a positive integer \(n\), a finite group \(G\), and a faithful representation \(V\) of \(G\),   one can describe a subspace arrangement whose intersection lattice is a generalized Dowling lattice in the sense of Hanlon \cite{Hanl}.
In this paper we  construct  the minimal  De Concini-Procesi wonderful model associated  to this subspace arrangement  and  give a description of its boundary.  Our aim is to point out   the nice poset  provided by the intersections of the irreducible components in the boundary, which   provides a geometric realization of the nested set poset of this generalized Dowling lattice.  It  can be represented by a family of  forests with leaves and labelings that depend   on the triple \( (n,G,V) \). We will  study it from the enumerative point  of view in the case when \(G\) is abelian.
\end{abstract}

\textbf{Keywords}: Wonderful models; Dowling lattice; subspace arrangement.

\section{Introduction}
The De Concini-Procesi wonderful models  of subspace arrangements were  introduced in \cite{DCP1} and \cite{DCP2} and  play since then a crucial role in the study of configuration spaces and in various other fields of mathematics.
The relevance of their combinatorial properties and their relation with discrete geometry were pointed out   for instance in \cite{feichtner}, \cite{feichtnerkozlov}, \cite{postnikov}, \cite{GaiffiServenti2}, \cite{gaiffipermutonestoedra};   their relations with Bergman fans,  toric and tropical geometry were enlightened  in  \cite{feichtnersturmfels} and \cite{denham}; the connections between   the geometry of these models and   the Chow rings of matroids were  pointed out first in \cite{feichtneryuz} and  then in  \cite{adiprasitokatzhuh}, where they  also played a crucial role in the study of   some  log-concavity problems.

 Let us recall their definition: given a subspace arrangement $\mathcal{G}$ in \((\mathbb{C}^n)^*\), we consider for each $D\in\mathcal{G}$ its annihilator  $D^{\perp}$ in \(\mathbb{C}^n\) and the projective space $\mathbb{P}(\mathbb{C}^n/D^{\perp})$. Let $\mathcal{A(G)}$ be the complement of \(\bigcup_{D\in\mathcal{G}}D^\perp\)  in $\mathbb{C}^n$; we can then define the following embedding:
 $$i:\mathcal{A(G)}\rightarrow \mathbb{C}^n\times \prod_{D\in\mathcal{G}}\mathbb{P}(\mathbb{C}^n/D^{\perp}).$$ The wonderful model $Y_{\mathcal{G}}$ is defined as the closure of the image of $i$. If $\mathcal{G}$ is a building set (this is a combinatorial property that will be discussed in Section \ref{secbuilding}), $Y_{\mathcal{G}}$ turns out to be a smooth variety such that \(Y_{\mathcal{G}}-i(\mathcal{A(G)})\) is a divisor with normal crossings.

The geometric and topological properties of a wonderful model are  deeply connected with its initial combinatorial data. For instance, the poset of the intersections of the irreducible components in the boundary  \(Y_{\mathcal{G}}-i(\mathcal{A(G)})\) is   the {\em nested set} poset associated to  \(\mathcal G\) (see Sections 2 and 3). Moreover, the integer cohomology ring  of $Y_{\mathcal{G}}$ can be described using  some functions with integer values called 'admissible functions'  defined on    $\mathcal{G}$ (see \cite{DCP1}, \cite{YuzBasi}).

 The hyperplane arrangements associated to real and complex reflection groups give rise to particularly interesting  wonderful models that  have been widely studied in the literature. In the case of the arrangements of type $A_{n-1}$, for example, the minimal building set associated to its poset  of intersections produces a wonderful model which is isomorphic to the moduli space \(M_{0,n+1}\) of genus 0 stable \(n+1\)-pointed curves.

 In the case of complex reflection groups of  type $G(r,p,n)$ (according to the classification in \cite{Shep}), the lattice of intersections associated to the arrangement has the property of being a Dowling lattice. This is a combinatorial object defined in the early 70's by Dowling in \cite{dowling} through the action of a finite group $G$ on the set $G\times \{0, 1,\ldots,n\}$.  The corresponding minimal wonderful models have been studied for instance in \cite{hendersonwreath}, \cite{callegarogaiffi3}, \cite{Ga1}.
 
In the 90's, Hanlon introduced in \cite{Hanl} a generalized version of the Dowling lattices. These new objects are defined given a finite group, a family of its subgroups with a particular property ('closed subgroups') and a positive integer  $n$.  In particular,  given any finite group \(G\), any   faithful representation \(V\)  of \(G\) and any  positive integer \(n\),  one can  construct as in Section 3 of \cite{Hanl} a subspace arrangement \({\mathcal H}(n,G,V)\) in \(V^n\) whose intersection lattice is a generalized Dowling lattice.  We will call the arrangements \({\mathcal H}(n,G,V)\) {\em generalized Dowling arrangements}. 

 The question that motivated this paper is whether it is possible to give a concrete description of the minimal wonderful model associated to \({\mathcal H}(n,G,V)\), generalizing the case of complex reflection groups. 
In particular  our aim is to point out   the nice poset  provided by the intersections of the irreducible components in the boundary, which   provides a geometric realization of the nested set poset of the generalized Dowling lattice.  We observe that it  can be represented by a family of  forests with leaves and labelings that depend   on the triple \((n,G,V)\).  We will  study it from the enumerative point of view in the case when \(G\) is abelian, and we will  provide  formulas for the associated exponential generating series in Theorems \ref{teogstgenerale} and \ref{teogst}. 

This work is divided into six sections.
In Section 2 we recall the definition and the main properties of  the building sets  associated to a collection of subspaces, and of the nested sets associated to a building set.

In Section 3 we deal with  the De Concini-Procesi's wonderful model $Y_{\mathcal{G}}$ associated to a subspace arrangement $\mathcal{G}$; in particular  the section ends with a description of the boundary of $Y_{\mathcal{G}}$.

In Section 4, following Hanlon,  we define   the  subspace arrangement \({\mathcal H}(n,G,V)\) associated to a faithful representation \(V\) of a finite group $G$ and to a positive integer \(n\);  as we mentioned above, the lattice of its intersections, ordered by reverse inclusion,  turns out to be   a particular example of generalized Dowling lattice. We then study in Section 5 the minimal building set and the  nested set poset associated to \({\mathcal H}(n,G,V)\); in particular,  we define a bijection between the nested set poset   and  a family of labelled forests.

Finally in Section 6 we focus on the case when \(G\) is abelian and  we obtain some  formulas (see Theorems \ref{teogstgenerale} and \ref{teogst}) for the exponential generating series that  enumerates the nested sets and therefore the boundary components of the associated wonderful models. Part of the computation of these exponential formulas is completed in Section 7 (Theorem \ref{formulaHtrees}).

\section{Building and nested sets}
\label{secbuilding}
 Given a complex space $V$ and its dual space $V^*$, we consider a finite set $\mathcal{H}$ of subspaces in $V^*$. We denote by $\mathcal{C_H}$ the set of subspaces in $V^*$ that can be written as sum of elements in $\mathcal{H}$.
 \begin{defin}
The set  $\mathcal{G}$ of subspaces in $V^*$ is {\em  building}  if each $C\in\mathcal{C_G}$ can be written as $C=G_1\oplus\ldots\oplus G_s$ where the $G_i$s are the maximal elements in $\mathcal{G}$ among the ones that are contained in $C$.
 \end{defin}
The nested sets associated to a given building set are defined  as follows:
\begin{defin}
Given a building set $\mathcal{G}$, a subset $\mathcal{S}\subseteq \mathcal{G}$ is said to be ($\mathcal{G}$-)nested if  for any integer \(p\geq 2\) and for any choice of  $S_1,\ldots,S_p\in\mathcal{S}$ not comparable with respect to inclusion, then  $S_1,\ldots,S_p$ are in direct sum and $S_1\oplus\ldots\oplus S_p\notin \mathcal{G}$.
\end{defin}

Given any  finite set $\mathcal{H}$ of subspaces in $V^*$, the set $\mathcal{C_H}$ defined before is building. The following definition allows us to find another building set naturally associated to $\mathcal{H}$. 

\begin{defin}
Given a set $\mathcal{H}$ of subspaces in $V^*$
 and a subspace $U\in \mathcal{C_H}$, a decomposition of $U$ is a list of non zero subspaces $U_1,U_2,\ldots,U_k\in \mathcal{C_H}$ with $U=U_1\oplus\ldots\oplus U_k$ such that for every subspace $A\subseteq U$ in $\mathcal{C_H}$, also $A\cap U_1$, $A\cap U_2,\ldots,A\cap U_k$ lie in $\mathcal{C_H}$ and $A=(A\cap U_1)\oplus(A\cap U_2)\oplus\ldots\oplus (A\cap U_k)$.
\end{defin}
If a subspace does not admit a decomposition it is called irreducible. The set of irreducible subspaces of $\mathcal{C_H}$ is denoted $\mathcal{F_H}$ and it turns out to be a building set. It is the minimal building set that contains $\mathcal{H}$ and is  contained in $\mathcal{C_H}$.

\section{Wonderful models}

Wonderful models have been introduced by De Concini and Procesi in their papers \cite{DCP1} and \cite{DCP2}. Here we recall briefly how such varieties are defined.

Let $V$ be a finite dimensional vector space over  $\mathbb{C}$. Let us consider a finite family $\mathcal{G}$ of subspaces of $V^{*}$, and for every $A\in \mathcal{G}$ consider its annihilator $A^{\perp}\subseteq V$. 
 We also denote by $\mathbb{P}_A$ the projective space of lines in $V/A^{\perp}$.

Let $\mathcal{V}_{\mathcal{G}}:=\cup_{\mathcal{A}\in\mathcal{G}}A^{\perp}$ be the union of all the subspaces $A^{\perp}$ and $\mathcal{A}_{\mathcal{G}}$ be the complement of $\mathcal{V}_{\mathcal{G}}$ in $V$. The rational map
\[
\pi_A:V\rightarrow V/A^{\perp}\rightarrow \mathbb{P}_A
\]
is defined outside $A^{\perp}$ and thus we have a regular morphism $\mathcal{A}_{\mathcal{G}}\rightarrow \prod_{A\in \mathcal{G}}\mathbb{P}_A$. The graph of this morphism is a closed subset of $\mathcal{A}_{\mathcal{G}}\times \prod_{A\in \mathcal{G}}\mathbb{P}_A$ which embeds as an open set into $V\times \prod_{A\in \mathcal{G}}\mathbb{P}_A$. Finally we have an embedding
\[
\rho: \mathcal{A}_{\mathcal{G}}\longrightarrow V\times \prod_{A\in \mathcal{G}}\mathbb{P}_A
\]
as a locally closed subset.

\begin{defin}
The wonderful model $Y_{\mathcal{G}}$ associated to $\mathcal{G}$ is 
    the closure of the image of $\rho$.
\end{defin}

In section $3$ of \cite{DCP1} De Concini and Procesi prove  that if $\mathcal{G}$ is a building set then $Y_{\mathcal{G}}$ is a smooth variety. They also provide  the following description of the boundary of $Y_{\mathcal{G}}$:
\begin{teo}
\begin{itemize}
\item[(1)] The complement $D$ of $\mathcal{A_G}$ in $Y_{\mathcal{G}}$ is the union of some smooth irreducible divisors $D_G$ indexed by the elements $G\in\mathcal{G}$. In particular  $D_G$ is the unique irreducible component in $D$ such that $\delta(D_G)=G^{\perp}$ (where  $\delta$ is the projection map $\delta:Y_{\mathcal{G}}\rightarrow V$).
\item[(2)] The divisors $D_{A_1},\ldots, D_{A_n}$ have nonempty intersection if and only if the set $\{A_1,\ldots,A_n\}$ is $\mathcal{G}$-nested. In this case their intersection is transversal and irreducible.
\end{itemize}
\end{teo}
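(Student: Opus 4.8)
The plan is to realize $Y_{\mathcal{G}}$ as the end product of an explicit sequence of blow-ups of $V$ along smooth centers, and then to read both statements off this construction. Fix a total order $G_1,\dots,G_N$ of the elements of $\mathcal{G}$ compatible with dimension and reverse inclusion, so that $G_i\subsetneq G_j\Rightarrow i>j$; equivalently we arrange the subvarieties $G^{\perp}\subseteq V$ in order of non-decreasing dimension. For $0\le k\le N$ set $\mathcal{G}_k=\{G_1,\dots,G_k\}$ and let $Y_{\mathcal{G}_k}$ be the corresponding model, obtained from $Y_{\mathcal{G}}$ by projecting away the factors $\mathbb{P}_{G}$ with $G\notin\mathcal{G}_k$. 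I would prove by induction on $k$ that each $\mathcal{G}_k$ is again a building set, that $Y_{\mathcal{G}_k}$ is smooth, and that the forgetful map $p_k\colon Y_{\mathcal{G}_k}\to Y_{\mathcal{G}_{k-1}}$ is the blow-up of $Y_{\mathcal{G}_{k-1}}$ along the smooth irreducible subvariety obtained as the (dominant) transform of $G_k^{\perp}$ — the strict transform when $G_k^{\perp}$ is not contained in an already blown-up center, and the full preimage otherwise. The decomposition property built into the definition of building set is exactly what guarantees that this center stays smooth and that the whole construction closes up.

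Granting this blow-up description, part (1) is immediate. Let $D_{G_k}\subseteq Y_{\mathcal{G}_k}$ be the exceptional divisor created at the $k$-th step, and keep the same name for its transform in $Y_{\mathcal{G}}$. As the exceptional divisor of a blow-up along a smooth irreducible center, $D_{G_k}$ is a smooth irreducible divisor. The projection $\delta\colon Y_{\mathcal{G}}\to V$ factors through the tower of structure maps, and by construction $\delta(D_{G_k})=G_k^{\perp}$; conversely any irreducible component of $Y_{\mathcal{G}}\setminus\mathcal{A}_{\mathcal{G}}$ lying over $G^{\perp}$ must be the divisor produced when $G$ is blown up, which gives the uniqueness claim. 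Since the blow-ups modify $V$ only along the arrangement $\mathcal{V}_{\mathcal{G}}$, one checks that $\delta$ restricts to an isomorphism over $\mathcal{A}_{\mathcal{G}}$, so the boundary $D$ is precisely $\bigcup_{G\in\mathcal{G}}D_G$.

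For part (2) I would argue with local coordinates supplied inductively by the tower. At a point $y\in Y_{\mathcal{G}}$ one produces local analytic coordinates in which each $D_G$ through $y$ is a distinct coordinate hyperplane; this makes the family of divisors through any point automatically transversal and a local complete intersection, so the whole content is to identify which families occur. If $\{A_1,\dots,A_n\}$ is $\mathcal{G}$-nested, I would show by descending induction that the divisors admit a common point and that $\bigcap_i D_{A_i}$ is itself (isomorphic to) the wonderful model of an induced, strictly smaller arrangement; irreducibility then follows from the inductive hypothesis, and transversality from the coordinate normal form. If instead $\{A_1,\dots,A_n\}$ is not nested, there are incomparable $S_1,\dots,S_p$ among them which either fail to be in direct sum or satisfy $S_1\oplus\cdots\oplus S_p\in\mathcal{G}$. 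In either case the equations cutting out the divisors $D_{S_i}$ in the product $\prod\mathbb{P}_{G}$ force the limiting lines to lie in a common proper subspace that violates the direct-sum (respectively non-membership) condition, so $\bigcap_i D_{A_i}=\emptyset$.

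The hard part will be the inductive step of the first paragraph, namely verifying that $p_k$ really is a blow-up along a \emph{smooth} center and that adjoining the factor $\mathbb{P}_{G_k}$ performs exactly that blow-up. This demands a careful local analysis of how the divisors $D_{G_1},\dots,D_{G_{k-1}}$ already created meet the transform of $G_k^{\perp}$, and it is here that the building hypothesis does the essential work by forcing those intersections to be clean. Once this local normal form is established, both the transversality and irreducibility in the nested case and the emptiness in the non-nested case reduce to bookkeeping against the definition of $\mathcal{G}$-nested set.
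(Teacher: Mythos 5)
Before anything else: the paper you were given does not prove this theorem at all. It is stated as background and attributed explicitly to Section~3 of \cite{DCP1} (``In section $3$ of \cite{DCP1} De Concini and Procesi prove\dots''), so there is no internal proof to compare your attempt with; the only meaningful benchmark is De Concini and Procesi's original argument. Measured against that, your road map is essentially theirs (and the one later axiomatized by Li Li for arrangements of subvarieties): order $\mathcal{G}$ so that each $G_k$ is minimal in $\mathcal{G}_k=\{G_1,\dots,G_k\}$, i.e.\ blow up the centers $G^\perp$ in non-decreasing dimension, show that each initial segment $\mathcal{G}_k$ is again building (this is the lemma that deleting a minimal element of a building set preserves the building property), and identify each forgetful map $p_k$ as the blow-up of $Y_{\mathcal{G}_{k-1}}$ along the proper transform of $G_k^\perp$. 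Two points you left hedged are in fact automatic: with your ordering the dominant transform is always the strict transform, since $G_k^\perp\subseteq G_j^\perp$ with $j<k$ would force $G_j\subsetneq G_k$ and hence $j>k$; and the description of $Y_{\mathcal{G}_k}$ as the projection of $Y_{\mathcal{G}}$ is correct because $\mathcal{A}_{\mathcal{G}}$ is dense in $\mathcal{A}_{\mathcal{G}_k}$ and the projection is proper.

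The genuine gap is that the steps carrying all the content of the theorem are announced rather than proved, and they do not reduce to ``bookkeeping.'' You assume (i) that $p_k$ is a blow-up along a \emph{smooth} irreducible center, and (ii) that at every point $y\in Y_{\mathcal{G}}$ there exist local coordinates in which the divisors through $y$ are distinct coordinate hyperplanes; and in the non-nested case the claim that ``the equations force the limiting lines to lie in a common proper subspace'' is a paraphrase of the conclusion, with no mechanism supplied. In \cite{DCP1} all of this is delivered by one construction that your sketch never touches: the explicit atlas of open charts indexed by a maximal nested set $\mathcal{S}$ together with a basis of $V^*$ adapted to $\mathcal{S}$ (with markings), on which the projection $\delta$ becomes monomial and the divisors $D_G$ become coordinate hyperplanes. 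It is from these charts that one reads off that the set $\{G\in\mathcal{G}\colon y\in D_G\}$ is nested for every point $y$ (which is exactly what kills non-nested intersections), and that the building axiom makes the center of each successive blow-up meet the previously created divisors cleanly. Note also that for nested families, local normal forms give transversality but not irreducibility, which is a global statement: your claim that $\bigcap_i D_{A_i}$ is a wonderful model of an induced arrangement is the right idea, but it too is proved via the same chart analysis, not deduced from it. In short: correct strategy, identical to the cited source's, but the technical core that makes the induction close --- the adapted-basis charts --- is missing, and without it neither part (1) nor part (2) is established.
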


As a consequence of this theorem, we notice that   the  poset provided by the intersections of the irreducible components in the boundary ordered by reverse inclusion  is isomorphic to the poset  of the $\mathcal{G}$-nested sets ordered by inclusion. 

\section{Generalized Dowling arrangements}
We start considering a finite group $G$,   a positive integer   $n$,  and a finite dimensional complex faithful representation $\rho:G\rightarrow GL(V)$  which does not contain the trivial representation. For each subgroup $H$ of $G$ we denote by $\Fix(H)=\{v\in V:\rho(h)v=v\text{ for all }h\in H\}$ the set of its fixed points in \(V\).  Notice that  since \(V\) does not contain the trivial representation we have  \(\Fix(G)=\{0\}\).

Let us now denote by \({\mathcal P}(G)\) the poset (with respect to inclusion) of  all the subgroups of $G$.  We  define  a function \(\phi\: : \: {\mathcal P}(G)\rightarrow {\mathcal P}(G)\)  requesting  that,  for every subgroup \(H\),  $\phi({H})$ is the maximal subgroup, with respect to inclusion,  such that $\Fix(\phi(H))= \Fix(H)$. 

We  denote by $\mathcal{K}$ the set of subgroups fixed by \(\phi\), which we will call {\em closed subgroups} following Hanlon's terminology in \cite{Hanl}. We consider $\mathcal{K}$ as a poset with respect to inclusion: we notice that  \(G\)  is a closed subgroup since it is maximal with respect to inclusion and  $\{e\}$ is a closed subgroup since  the representation \(V\) is  faithful.  We also point  out that  for 
\(K, K' \in  \mathcal{K}\)  we have \(K=K'\) if and only if $\Fix(K) = \Fix(K')$.

  Let us then consider the subspace arrangement  \({\mathcal H}(n,G,V)\) given by the following subspaces in $V^n$:
 \[
 H(i, j, g)=\{(v_1,\ldots,v_n)|v_{j}=\rho(g)v_i\}  
 \]
 for \(1\leq i<j\leq n\) and \(g\in G\), and 
\[
H(i,i,g)=\{(v_1,\ldots,v_n)|v_i=\rho(g)v_i\}
\]
 for \(1\leq i \leq n\) and \(g\in G-\{e\}\).

We notice that for \(i\neq j\) the subspace \(H(i, j, g)\) has codimension \(dim \ V\) in \(V^n\) while the subspace \(H(i, i, g)\) has codimension \(dim \ V - dim \ Fix(<g>)\), where \(<g>\) is the cyclic subgroup generated by \(g\).

We call  \({\mathcal H}(n,G,V)\) the  {\em generalized Dowling arrangement} associated to the triple \((n,G,V)\).

The motivation for this name comes from the following remark: let  $L(n, G, V)$ be its intersection  lattice, ordered by reverse inclusion. It  provides  an example  of  the   generalized Dowling lattices introduced by Hanlon, as it was shown  in Section 3 of  \cite{Hanl}. More precisely,  using Hanlon's terminology, the function \(\phi\) is a {\em closure operator} and $L(n, G, V)$ is isomorphic to  $D_n(G, K(\phi))$.

In particular, when \(G\) is \(\mathbb{Z}_r\) and \(V\) is an irreducible (one dimensional) representation,  the lattice $L(n, G, V)$  is  isomorphic to the Dowling lattice \(Q_n(\mathbb{Z}_r)\). It can be seen as  the intersection lattice of the hyperplane arrangement associated to the complex reflection group \(G(r,1,n)\).

\section{Nested sets and labelled forests}

After fixing the triple \((n, G, V)\), let us denote for brevity  by \(\mathcal L'\)  the intersection lattice $L(n, G, V)$ described in the preceding section.

We define $\mathcal{G}'$ to be the subset of \(\mathcal L'\) whose elements are the subspaces:

\begin{itemize}
\item \[
H^{K}(i_{1}^{g_{1}K},\ldots,i_{k}^{g_{k}K}):=\{(v_{1},\ldots,v_{n})\in V^n|v_{i_1}=g_1v,\ldots,v_{i_k}=g_kv,v\in \Fix(K)\},
\] 
where \(1< k\leq n\), 
$1 \leq i_1<\ldots < i_k\leq n$,
$K$ ranges in the set \(\mathcal K\) of closed  subgroups of $G$,  the \(g_jK\)'s are cosets and the symbol \(i_j^{g_{j}K}\) represents the  ordered pair given by the index \(i_j\) and the coset \(g_jK\);

\item \[
H^{K}(i_{}^{eK}):=\{(v_{1},\ldots,v_{n})\in V^n|v_{i}\in \Fix(K)\},
\] where $1 \leq i \leq n$ and  $K$ ranges in the set \(\mathcal K\) of closed  subgroups of $G$ different from \(\{e\}\).
\end{itemize} 
 We notice that here (and from now on) we omitted \(\rho\) for brevity, using the notation  \(gv\) instead of \(\rho(g)(v)\).
It is easy to check that the intersection lattice of $\mathcal{G}'$ is equal to \(\mathcal L'\); more precisely, every element \(A\) of \(\mathcal L'\) can be expressed as the transversal intersection of the minimal elements of $\mathcal{G}'$ containing \(A\).

Therefore, if we  denote by  $\mathcal{G}$ the set of the annihilators in \((V^n)^*\) of the elements in $\mathcal{G}'$, we have that \(\mathcal{C}_{\mathcal{G}}\) is the set of the annihilators of the elements in \(\mathcal L'\) and  $\mathcal{G}$ is the building set of irreducibles of  \(\mathcal{C}_{\mathcal{G}}\).

Let us now study the nested sets associated to the building set $\mathcal{G}$. We begin by stating two simple lemmas.
\begin{lema}
Given a closed subgroup $H<G$, all its conjugate subgroups are closed as well.
\end{lema}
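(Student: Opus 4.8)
The plan is to show that for any closed subgroup $H < G$ and any $g \in G$, the conjugate $gHg^{-1}$ is again closed, i.e.\ fixed by $\phi$. By definition, $gHg^{-1}$ is closed precisely when it is the maximal subgroup with the same fixed-point space, so the whole argument reduces to understanding how conjugation interacts with the operators $\Fix(-)$ and $\phi$. The key observation I would establish first is a compatibility identity: for every subgroup $K < G$ and every $g \in G$ one has
\begin{equation*}
\Fix(gKg^{-1}) = \rho(g)\bigl(\Fix(K)\bigr).
\end{equation*}
This follows by a direct check: $v \in \Fix(gKg^{-1})$ means $\rho(gkg^{-1})v = v$ for all $k \in K$, which rearranges to $\rho(k)\bigl(\rho(g)^{-1}v\bigr) = \rho(g)^{-1}v$, i.e.\ $\rho(g)^{-1}v \in \Fix(K)$, equivalently $v \in \rho(g)\Fix(K)$.

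Next I would prove that $\phi$ commutes with conjugation, namely $\phi(gHg^{-1}) = g\,\phi(H)\,g^{-1}$ for all $g \in G$. Using the identity above, $\Fix(gHg^{-1}) = \rho(g)\Fix(H)$; now $g\,\phi(H)\,g^{-1}$ is a subgroup containing $gHg^{-1}$ whose fixed space is $\rho(g)\Fix(\phi(H)) = \rho(g)\Fix(H) = \Fix(gHg^{-1})$. So it remains only to verify maximality: if $M \supseteq gHg^{-1}$ has $\Fix(M) = \Fix(gHg^{-1})$, then $g^{-1}Mg \supseteq H$ has fixed space $\rho(g)^{-1}\Fix(M) = \rho(g)^{-1}\Fix(gHg^{-1}) = \Fix(H)$, whence $g^{-1}Mg \subseteq \phi(H)$ by the maximality defining $\phi(H)$, and therefore $M \subseteq g\,\phi(H)\,g^{-1}$. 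This shows $g\,\phi(H)\,g^{-1}$ is the maximal subgroup with its fixed space, i.e.\ $\phi(gHg^{-1}) = g\,\phi(H)\,g^{-1}$.

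The statement then follows immediately: if $H$ is closed, then $\phi(H) = H$, so $\phi(gHg^{-1}) = g\,\phi(H)\,g^{-1} = gHg^{-1}$, which is exactly the condition that $gHg^{-1}$ be closed. I do not anticipate a serious obstacle here; the only point requiring care is the maximality half of the $\phi$-conjugation identity, where one must push a hypothetical larger subgroup $M$ back across the conjugation and invoke the \emph{defining} maximality of $\phi(H)$ rather than just matching fixed spaces. Everything rests on the linearity of $\rho(g)$, which makes $\rho(g)$ a bijection of $V$ and hence respects the lattice of fixed subspaces, so the argument is essentially formal once the compatibility identity for $\Fix$ is in hand.
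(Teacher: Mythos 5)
Your proof is correct. It differs in packaging from the paper's: the paper argues by contradiction in three lines --- if $H'=gHg^{-1}$ were not closed, there would exist $K\supsetneq H'$ with $\Fix(K)=\Fix(H')$; conjugating back gives $g^{-1}Kg\supsetneq H$ with $\Fix(g^{-1}Kg)=\Fix(H)$, contradicting the maximality that defines closedness of $H$. You instead prove the strictly stronger equivariance identity $\phi(gHg^{-1})=g\,\phi(H)\,g^{-1}$ and deduce the lemma as an immediate corollary. The engine is the same in both arguments: the compatibility $\Fix(gKg^{-1})=\rho(g)\Fix(K)$ (which the paper uses silently in the chain $\Fix(H)=g^{-1}\Fix(H')=g^{-1}\Fix(K)=\Fix(g^{-1}Kg)$) together with the trick of transporting a competitor subgroup across the conjugation and invoking maximality on the other side; indeed your ``maximality half'' is exactly the paper's contradiction run forwards. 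What your version buys is a reusable statement: equivariance of $\phi$ says the closure operator commutes with the conjugation action on subgroups, which is precisely what makes the set $\mathcal K$ of closed subgroups stable under conjugation and the induced subposet of conjugacy classes in $\Lambda(G)$ (used later in Section 5) well defined. What the paper's version buys is brevity, at the cost of leaving the equivariance implicit.
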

\begin{proof}
Let us suppose that $H'=gHg^{-1}$ is not a closed subgroup. We know that there is a $K\supsetneq H'$ such that $K$ is closed and $\Fix(K)=\Fix(H')$. Hence we have $g^{-1}Kg\supsetneq g^{-1}H'g=H$ and $\Fix(H)=g^{-1}\Fix(H')=g^{-1}\Fix(K)=\Fix(g^{-1}Kg)$ and this is absurd.
\end{proof}
 We now have the main property for the elements in $\mathcal{G}$:
\begin{lema}\label{lemav1}
Given $K< G$ a subgroup and $K^g=gKg^{-1}$,
\[
H^K(i_1^{gK},i_2^{g_2K},\ldots,i_k^{g_kK})=H^{K^g}(i_1^{eK^g},i_2^{g_2g^{-1}K^g},\ldots,i_k^{g_kg^{-1}K^g})
\]
holds.
\end{lema}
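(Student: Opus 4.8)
The plan is to prove the asserted equality of subspaces directly, by exhibiting a change of variables that turns one parametric description into the other. Both sides are subspaces of $V^n$ consisting of the tuples $(v_1,\ldots,v_n)$ whose coordinates indexed by $i_1,\ldots,i_k$ are prescribed in terms of a single vector ranging over a fixed-point subspace, while the coordinates $v_i$ with $i\notin\{i_1,\ldots,i_k\}$ remain free. Hence it suffices to match the two parametrizations term by term.

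First I would spell out the defining conditions. By definition the left-hand side consists of the tuples with $v_{i_1}=gv$ and $v_{i_j}=g_jv$ for $2\le j\le k$, where $v$ ranges over $\Fix(K)$; the right-hand side consists of the tuples with $v_{i_1}=w$ (since the coset attached to $i_1$ is $eK^g$) and $v_{i_j}=g_jg^{-1}w$ for $2\le j\le k$, where $w$ ranges over $\Fix(K^g)$.

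The key step is the substitution $w=gv$. To justify that it maps the correct range onto the correct range, I would first record the auxiliary identity $\Fix(K^g)=g\,\Fix(K)$: for $u\in\Fix(K)$ and $h\in K$ one has $(ghg^{-1})(gu)=g(hu)=gu$, giving $g\,\Fix(K)\subseteq\Fix(gKg^{-1})$, and the reverse inclusion follows symmetrically by conjugating with $g^{-1}$. Consequently $v\mapsto gv$ is a linear bijection from $\Fix(K)$ onto $\Fix(K^g)$. Under this substitution the left-hand conditions become exactly the right-hand ones: $v_{i_1}=gv=w$, and for $j\ge 2$, writing $v=g^{-1}w$ yields $v_{i_j}=g_jv=g_jg^{-1}w$.

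Since the substitution is a bijection between the ranges of the parametrizing vectors and the prescribed-coordinate conditions coincide term by term, the two subspaces contain precisely the same tuples, which proves the identity. I do not expect a genuine obstacle here: the statement is essentially a bookkeeping identity, and the only point requiring a moment's care is the fixed-point relation $\Fix(K^g)=g\,\Fix(K)$, which is what guarantees that the change of variables respects the space over which the free vector ranges. This same fact is what underlies the preceding lemma on the closedness of conjugate subgroups.
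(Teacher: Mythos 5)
Your proof is correct and follows essentially the same route as the paper's: a direct verification that the defining conditions of the two parametrizations coincide under the substitution $w=gv$, using the identity $\Fix(K^g)=g\,\Fix(K)$. If anything, you are slightly more careful than the paper, which only writes out one inclusion and uses the fixed-point relation implicitly, whereas you prove that relation and make the bijection of parametrizing vectors explicit.
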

\begin{proof}
Let $w\in H^K(i_1^{gK},i_2^{g_2K},\ldots,i_k^{g_kK})$, $w=(v_1,\ldots,v_n)$, this implies $v_{i_1}=gv$, $v\in\Fix(K)$ and $v_{i_j}=g_jv$ for $j=2,\ldots,n$. So $v_{i_1}\in \Fix(K^g)$ and $v_{i_j}=g_jg^{-1}v_{i_1}$, these are the conditions for $v\in H^{K^g}(i_1^{eK^g},i_2^{g_2g^{-1}K^g},\ldots,i_k^{g_kg^{-1}K^g})$.
\end{proof}

Thanks to this lemma we can write  $$\mathcal{G}'= \{H^K(i_1^{eK},i_2^{g_2K},\ldots,i_k^{g_kK})\ | \ \hbox{if} \ k=1  \ K \in \mathcal K-\{e\}, \ \hbox{if} \ k>1 \   \ K \in \mathcal K \}.$$

Given two elements in $\mathcal{G}'$, $H^1=H^{K}(i_1^{eK},\ldots,i_k^{g_{k}K})$ and $H^2=H^{K'}(j_1^{eK'},\ldots,j_t^{h_{t}K'})$, we want to determine when there exists a nested set containing both their annihilators.

\begin{prop}\label{propv1}
 Given $H^1$ and $H^2$ as above, then $H^1\supseteq H^2$ if and only if the following facts hold:
    \begin{itemize}
    \item[(1)] $\{i_1,\ldots,i_k\}\subseteq \{j_1,\ldots,j_t\}$,
    \item[(2)] for all the $p\in \{i_1,\ldots,i_k\}$ (say \(p=i_r=j_t\) in view of point \((1)\)),   we have  $\Fix(K)\supseteq g_r^{-1}h_t\Fix(K')$.
    \item[(3)] for all $p,s\in \{i_1,\ldots,i_k\}$ (say \(p=i_r=j_t\), \(s=i_\theta=j_\gamma\)  in view of point \((1)\)), we have $h_\gamma^{-1}g_\theta g_r^{-1}h_t\in K'$.
    \end{itemize}
\end{prop}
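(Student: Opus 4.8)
The plan is to unwind the definitions so that the set-theoretic inclusion $H^2\subseteq H^1$ becomes a statement about group elements acting on $\Fix(K')$, and then to read off the three conditions. Recall that a tuple $(v_1,\dots,v_n)$ lies in $H^2$ exactly when there is a vector $w\in\Fix(K')$ with $v_{j_s}=h_s w$ for every $s$ (with $h_1=e$), the remaining coordinates being arbitrary; similarly it lies in $H^1$ exactly when there is $u\in\Fix(K)$ with $v_{i_r}=g_r u$ for every $r$ (with $g_1=e$). Assuming (1), each index $i_r$ equals some $j_{\sigma(r)}$, so on a point of $H^2$ the relevant coordinate is $v_{i_r}=h_{\sigma(r)}w$. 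Thus the inclusion is equivalent to the assertion: \emph{for every $w\in\Fix(K')$ there is $u\in\Fix(K)$ with $g_r u=h_{\sigma(r)}w$ for all $r$}, i.e. $u=g_r^{-1}h_{\sigma(r)}w$ independently of $r$.

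For the implication ($\Leftarrow$) I would assume (1)--(3) and produce the witness directly. Set $u:=g_r^{-1}h_{\sigma(r)}w$. Condition (3), which reads $h_{\sigma(\theta)}^{-1}g_\theta\,g_r^{-1}h_{\sigma(r)}\in K'$, forces $g_\theta^{-1}h_{\sigma(\theta)}w=g_r^{-1}h_{\sigma(r)}w$ for every pair $r,\theta$, since $w$ is fixed by $K'$; hence $u$ is well defined. Condition (2), namely $g_r^{-1}h_{\sigma(r)}\Fix(K')\subseteq\Fix(K)$, guarantees $u\in\Fix(K)$. Then $v_{i_r}=h_{\sigma(r)}w=g_r u$ for all $r$, so every point of $H^2$ lies in $H^1$.

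For ($\Rightarrow$) I would argue in two steps. First, (1): if some $i_r$ were not among the $j_s$, that coordinate would be free on $H^2$, and I would exhibit a point of $H^2$ violating the linkage imposed by $H^1$. Taking $w=0$ and all coordinates zero except $v_{i_r}\neq 0$ works: for $k\geq 2$ it breaks the equality $v_{i_r}=g_rg_{r'}^{-1}v_{i_{r'}}$ demanded by $H^1$ (any $r'\neq r$ gives $v_{i_{r'}}=0$ while $v_{i_r}\neq 0$), and for $k=1$ it violates $v_{i_1}\in\Fix(K)$ once one picks $v_{i_1}\notin\Fix(K)$, which is possible because $K\neq\{e\}$ forces $\Fix(K)\subsetneq V$. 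Granting (1), the reformulation above says that for each $w\in\Fix(K')$ the element $u=g_r^{-1}h_{\sigma(r)}w\in\Fix(K)$ is independent of $r$; letting $w$ range over $\Fix(K')$ yields $g_r^{-1}h_{\sigma(r)}\Fix(K')\subseteq\Fix(K)$, which is (2), and $g_\theta^{-1}h_{\sigma(\theta)}w=g_r^{-1}h_{\sigma(r)}w$ for all such $w$, i.e. $m:=h_{\sigma(\theta)}^{-1}g_\theta g_r^{-1}h_{\sigma(r)}$ fixes $\Fix(K')$ pointwise.

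The main obstacle --- and the only point where the hypothesis that $K'$ is \emph{closed} really enters --- is this last translation: from ``$m$ fixes $\Fix(K')$ pointwise'' one must deduce $m\in K'$, which is exactly condition (3). This is the defining property of closed subgroups: since $\Fix(\langle K',m\rangle)=\Fix(K')\cap\Fix(m)=\Fix(K')$ and $K'$ is maximal among subgroups with this fixed space, one gets $\langle K',m\rangle=K'$, hence $m\in K'$. Apart from this, the argument is bookkeeping with the index correspondence $\sigma$ and the normalization $g_1=h_1=e$; the only genuine care needed is the case split $k=1$ versus $k\geq 2$ in the proof of (1).
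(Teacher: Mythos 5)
Your proof is correct and takes essentially the same approach as the paper's: both directions unwind the defining equations of $H^1$ and $H^2$ pointwise, and your backward implication constructs the witness $u=g_r^{-1}h_{\sigma(r)}w$ exactly as the paper does. The only differences are cosmetic: you spell out the ``obvious'' condition (1) (including the $k=1$ case, where faithfulness gives $\Fix(K)\subsetneq V$), and in deriving condition (3) you apply closedness to $K'$ itself rather than to the conjugate $h_\gamma K'h_\gamma^{-1}$ as the paper does, which lets you bypass the paper's preliminary lemma that conjugates of closed subgroups are closed.
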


\begin{proof}
We start supposing $H^2\subseteq H^1$.
\begin{itemize}
\item[(1)] Obvious.
\item[(2)] Let us take $p\in \{i_1,\ldots,i_k\}$, say \(p=i_r=j_t\).  Then $v\in H_2\Rightarrow v\in H_1$ so if we  have $v_p=v_{j_t}=h_t w',$ with $w'\in \Fix(K')$  we also have  $v_p=v_{i_r}=g_r w,$ with $w\in\Fix(K)$. This implies $\Fix(K)\supseteq g_r^{-1}h_t\Fix(K')$.
\item[(3)] Given $p,s\in \{i_1,\ldots,i_k\}$ with  \(p=i_r=j_t\), \(s=i_\theta=j_\gamma\), we must have $g_rg_\theta^{-1}v_s=v_p$ and $h_th_\gamma^{-1}v_s=v_p$, so $(g_rg_\theta^{-1})^{-1}h_th_\gamma^{-1}v_s=v_s$. Since $v_s$ can be any element in $h_\gamma\Fix(K')=\Fix(h_\gamma K'h_\gamma^{-1})$ we deduce that $(g_rg_\theta^{-1})^{-1}h_th_\gamma^{-1}\in h_\gamma K'h_\gamma ^{-1}$ and so $h_\gamma^{-1}g_\theta g_r^{-1}h_t\in K'$.
\end{itemize}
Suppose now that the three conditions are true. Taken $v\in H^2$ we want to show that $v\in H^1$. We have $v=(v_1,\ldots,v_n)$ and for all $s=j_\gamma \in \{j_1,\ldots,j_t\}$, $v_s=h_\gamma w',$ with $w'\in\Fix(K')$. Thanks to conditions $(1)$ and $(2)$ we have that for all $p=i_r \in \{i_1,\ldots,i_k\}$, $g_r^{-1}v_p=w$ with
 $w\in\Fix(K)$. Now we have to check that for all $p=i_r=j_t, s=i_\theta=j_\gamma \in \{i_1,\ldots,i_k\}$ we have $g_r^{-1}v_p=g_\theta^{-1}v_s$. Now  we have \(v_p=h_tw', v_s=h_\gamma w'\) with \(w'\in Fix \ K'\), therefore   $g_r^{-1}v_p=g_r^{-1}h_tw'$ while $g_\theta^{-1}v_s=g_\theta^{-1}h_\gamma w'$. The equality follows because
 $h_\gamma^{-1}g_\theta g_r^{-1}h_t\in K'$ (condition $(3)$). 
\end{proof}

So if the annihilators  of $H^1$ and $H^2$ belong to the same \(\mathcal G\)-nested set then one of the following cases holds:
\begin{itemize}
 \item $H^1$ and $H^2$ are  one included into the other, and  the three conditions of Proposition \ref{propv1} hold.
  \item $\{i_1,\ldots,i_k\}\cap \{j_1,\ldots,j_t\}=\emptyset$  and $K\neq G$ or $K'\neq G$.  In this case the annihilator of $H^1\cap H^2$ is not an element of the building set $\mathcal{G}$.
\end{itemize}
In the case neither of the two conditions above holds we have that  $H^1\cap H^2 \in \mathcal G'$ so its annihilator belongs to  $\mathcal{G}$.

Now we need to  recall the definition of the poset of conjugacy classes of subgroups in a group \(G\).
\begin{defin}
Given a subgroup \(H\) of \(G\), we denote by \([H]\) its conjugacy class. The set \(\Lambda(G)\) of conjugacy classes of \(G\) is a poset with the following order relation:
\[ [P]\leq [Q] \quad \hbox{iff} \quad \exists g\in G \; \hbox{such that } \; gPg^{-1}\subseteq Q\]
\end{defin}
Since the set \(\mathcal K\) of closed subgroups is closed with respect to conjugation, the set of conjugacy classes in \(\mathcal K\) can be viewed as a  subposet of \(\Lambda(G)\).
If \(G\) is abelian, this subposet coincides with \(\mathcal K\) ordered by inclusion.

\begin{obs}
\label{obsconjugacy}
We notice that in condition \((2)\) of Proposition \ref{propv1},  $\Fix(K)\supseteq g_r^{-1}h_t\Fix(K')$ is equivalent to $\Fix(K)\supseteq \Fix(g_r^{-1}h_tK'h_t^{-1}g_r)$. Since the involved subgroups are closed,  this in turn is equivalent to \(K\subseteq g_r^{-1}h_tK'h_t^{-1}g_r\), that is to say \(h_t^{-1}g_rKg_r^{-1}h_t\subseteq K'\), that  in particular implies  \([K]\leq [K']\).
\end{obs}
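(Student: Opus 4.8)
The plan is to prove the three claimed equivalences one at a time, moving from the condition on fixed spaces, to the condition on the closed subgroups, and finally to the conjugacy classes. First I would dispose of the initial equivalence by means of the identity $a\Fix(K') = \Fix(aK'a^{-1})$, valid for every $a \in G$ (here $a = g_r^{-1}h_t$). This is immediate from $\rho$ being a homomorphism: if $w \in \Fix(K')$, then for each $k'\in K'$ one has $(ak'a^{-1})(aw) = ak'w = aw$, so $aw \in \Fix(aK'a^{-1})$; applying the same reasoning to $a^{-1}$ gives the reverse inclusion. Thus $g_r^{-1}h_t\Fix(K')$ and $\Fix(g_r^{-1}h_tK'h_t^{-1}g_r)$ are literally the same subspace, and the two displayed containments coincide.

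The core of the argument is the second equivalence: for closed subgroups $P$ and $Q$, one has $\Fix(P)\supseteq \Fix(Q)$ if and only if $P\subseteq Q$. Here I set $P = K$ and $Q = g_r^{-1}h_tK'h_t^{-1}g_r$, noting that $Q$ is closed by the lemma asserting that conjugates of closed subgroups are closed. The direction $P\subseteq Q \Rightarrow \Fix(P)\supseteq\Fix(Q)$ is just the order-reversing behaviour of $\Fix$ and needs no closedness. For the converse I would argue through the closure operator $\phi$. Assuming $\Fix(P)\supseteq \Fix(Q)$, consider the subgroup $\langle P,Q\rangle$ they generate; since a vector is fixed by $\langle P,Q\rangle$ exactly when it is fixed by every generator, one has $\Fix(\langle P,Q\rangle) = \Fix(P)\cap\Fix(Q) = \Fix(Q)$. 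Because $Q$ is closed, $Q$ is the maximal subgroup with fixed space $\Fix(Q)$, so $\phi(\langle P,Q\rangle) = \phi(Q) = Q$; as $\phi$ is extensive we get $\langle P,Q\rangle \subseteq \phi(\langle P,Q\rangle) = Q$, and in particular $P\subseteq Q$.

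It then remains to conjugate and read off the conjugacy relation. Conjugating $K\subseteq g_r^{-1}h_tK'h_t^{-1}g_r$ by $h_t^{-1}g_r$ on the left and by its inverse on the right yields the equivalent inclusion $h_t^{-1}g_rKg_r^{-1}h_t\subseteq K'$, which, setting $b = h_t^{-1}g_r$, is exactly a witness that $bKb^{-1}\subseteq K'$; by the definition of the order on $\Lambda(G)$ this gives $[K]\leq[K']$ (an implication only, since the poset relation asks merely for the existence of some such conjugate). The one genuinely substantive step is the converse half of the second equivalence, where closedness is indispensable: without it, $\Fix(P)\supseteq\Fix(Q)$ would only force $P$ into $\phi(Q)$ rather than into $Q$ itself. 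Everything else is the homomorphism identity for $\Fix$ and routine conjugation bookkeeping.
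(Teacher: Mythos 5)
Your argument is correct and is essentially the paper's own (implicit) one: the Remark is stated without proof, and your three steps --- the identity $a\Fix(K')=\Fix(aK'a^{-1})$, the equivalence $\Fix(P)\supseteq\Fix(Q)\iff P\subseteq Q$ for closed subgroups (using that conjugates of closed subgroups are closed, together with the closure operator $\phi$), and the conjugation bookkeeping giving $[K]\leq[K']$ --- are exactly the justifications the paper's phrasing presupposes. In particular, your handling of the one substantive point, deducing $P\subseteq Q$ from $\Fix(P)\supseteq\Fix(Q)$ via extensivity of $\phi$ and $\phi(\langle P,Q\rangle)=\phi(Q)=Q$, is precisely the intended use of the closedness hypothesis.
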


As a consequence of Proposition \ref{propv1} and of the  Remark \ref{obsconjugacy} above,  the collection of all the $\mathcal{G}$-nested sets is in bijection with the following  family of forests.

We start considering $n$ vertices labelled with the elements in $M=\{1,\ldots,n\}$ and a partition of $M$ into  some sets $M_1,\ldots,M_k$. We now consider for each $M_i$ a rooted oriented tree $\mathcal{T}_i$ whose leaves are labelled by the elements of $M_i$ (so in the end we will have a forest $\mathcal{F}$ with $k$ trees). Then we  label each internal vertex (i.e. each vertex different from a leaf) and each edge of $\mathcal{T}_i$ according to the following rules:
\begin{itemize}\label{nonbif}
 \item[(1)] Each internal vertex  is labelled with a closed subgroup of $G$. If \(v\) is labelled by \(Q\) and \(w\) is an internal vertex descendant of \(v\) labelled by \(P\), then \([P]\leq [Q]\).
  \item[(2)] If an internal  vertex \(w\) labelled by the closed subgroup \(P\) is the only direct descendent of a vertex \(v\)  labelled by the closed subgroup \(Q\) then \([P]\lneq [Q]\). 
  If a  leaf    is the only direct descendent of a vertex \(v\),  the label of \(v\)  is different from \(\{e\}\).
   \item[(3)] The label $G$  appears in at most one tree of $\mathcal{F}$, and  a  vertex labelled with  \(G\) has at most one  direct descendant labelled with $G$.
  \item[(4)] If an edge stems from a vertex, it is labelled with a coset of the closed subgroup labeling the vertex. More precisely, if \(w\), labelled by \(P\),  is a direct descendant of \(v\), labelled by \(Q\), then the label of the edge with vertices \(v\) and \(w\) is \(aQ\), with \(a\) such that \(a^{-1}Pa\subseteq Q\).
   \item[(5)] For an internal vertex \(v\) labelled with \(K\), the edge connecting $v$ to the subtree containing the
smallest leaf is labelled with the coset \(eK\).

\end{itemize}
The following picture is an example of  a forest  associated to a nested set; here we consider  a group \(G\) with  some subgroups  \(\Gamma_1, \Gamma_2, \Gamma_1'\) such that \(\{e\}\subsetneq \Gamma_2\subsetneq \Gamma_1\subsetneq G\) and \(\{e\}\subsetneq \Gamma_2\subsetneq \Gamma_1'\subsetneq G\):

\begin{figure}[H]  
\centering
\begin{tikzpicture}[scale=1.8]
     \draw (0,0) node[anchor=north]  {\footnotesize4}; 
     \draw (0.5,0) node[anchor=north]  {\footnotesize7};
     \draw (1.5,0) node[anchor=north]  {\footnotesize3};
     \draw (2.25,0) node[anchor=north]  {\footnotesize6};
     \draw (2.75,0) node[anchor=north]  {\footnotesize2};
     \draw (3.5,0) node[anchor=north]  {\footnotesize1};
     \draw (4,0) node[anchor=north]  {\footnotesize5};
     \draw (5,0) node[anchor=north]  {\footnotesize8}; 
     \draw (0.25,0.5) node[anchor=east]  {\footnotesize{$\Gamma_1$}}; 
     \draw (2.5,0.5) node[anchor=west]  {\footnotesize{$\Gamma_2$}}; 
     \draw (2.25,1) node[anchor=west]  {\footnotesize{$\Gamma_2$}};  
     \draw (4,1) node[anchor=west]  {\footnotesize{$\Gamma_2$}};     
     \draw (1.5,1.5) node[anchor=west]  {\footnotesize{$\Gamma_1$}}; 
     \draw (1.5,2) node[anchor=west]  {\footnotesize{$G$}};  
     \draw (4,2) node[anchor=west]  {\footnotesize{$\Gamma'_1$}}; 
     \draw (0.12,0.25) node[anchor=east]  {\footnotesize{$e \Gamma_1$}}; 
     \draw (0.37,0.25) node[anchor=west]  {\footnotesize{$b\Gamma_1$}}; 
     \draw (2.37,0.25) node[anchor=east]  {\footnotesize{$d\Gamma_2$}}; 
     \draw (2.62,0.25) node[anchor=west]  {\footnotesize{$e\Gamma_2$}}; 
     \draw (1.85,0.5) node[anchor=east]  {\footnotesize{$c\Gamma_2$}}; 
     \draw (0.9,1.05) node[anchor=east]  {\footnotesize{$a\Gamma_1$}}; 
     \draw (1.95,1.25) node[anchor=west]  {\footnotesize{$e\Gamma_1$}}; 
     \draw (2.37,0.75) node[anchor=west]  {\footnotesize{$e\Gamma_2$}}; 
     \draw (1.5,1.75) node[anchor=west]  {\footnotesize{$G$}}; 
     \draw (4,1.5) node[anchor=west]  {\footnotesize{$e\Gamma'_1$}}; 
     \draw (4,0.5) node[anchor=west]  {\footnotesize{$e\Gamma_2$}}; 

\foreach \Point in {(0,0), (0.5,0),(0.25,0.5),(1.5,1.5), (1.5,2) , (1.5,0),(2.25,1),(2.25,0),(2.5,0.5),(2.75,0),(3.5,0),(4,0),(4,1),(4,2),(5,0)}{
    \node at \Point {\textbullet};
}
    \draw[thick] (0.25,0.5) -- +(0.25,-0.5); 
    \draw[thick] (0.25,0.5) -- +(-0.25,-0.5); 
    \draw[thick] (0.25,0.5) -- +(1.25,1); 
    \draw[thick] (1.5,1.5) -- +(0,0.5); 
    \draw[thick] (2.25,1) -- +(-0.75,0.5); 
    \draw[thick] (2.25,1) -- +(+0.25,-0.5); 
    \draw[thick] (2.25,1) -- +(-0.75,-1); 
    \draw[thick] (2.5,0.5) -- +(-0.25,-0.5); 
    \draw[thick] (2.5,0.5) -- +(0.25,-0.5); 
    \draw[thick] (4,2) -- +(0,-1); 
    \draw[thick] (4,1) -- +(0,-1); 
  \end{tikzpicture}
\caption{}
\label{foresta1}
\end{figure}

where $a,b,c,d\in G$.

We will denote by $\mathcal{F}(n,G,V)$ the set of all the forests (with at least a non-leaf vertex) constructed as above.
The bijection between  this set and  the family of nested sets is obtained as follows:  given a forest one constructs its corresponding nested set by associating  to every internal vertex \(v\) the subspace 
\(H^K(i_1^{g_1K},...,i_j^{g_jK})\) where 
\begin{itemize}
\item \(i_1,...,i_j\) are the labels of the leaves descending from \(v\);
\item \(K\) is the label of \(v\);
\item if in the path from \(v\) to the leaf \(i_r\) one finds (from the top) the cosets \(a_1K, a_2K_2,...,a_lK_l\) then \(g_rK=a_l\cdots a_2a_1K\). 
\end{itemize}
For example the forest in Figure \ref{foresta1} corresponds to the nested set provided by the following seven subspaces:  
\[
\begin{array}{l}
 H^{G}(2^{G},3^{G},4^{G},6^{G},7^{G}),  \\
H^{\Gamma_1}(2^{e\Gamma_1},3^{c\Gamma_1},4^{a\Gamma_1},6^{d\Gamma_1},7^{ba\Gamma_1}), \\
H^{\Gamma_1}(4^{e\Gamma_1},7^{b\Gamma_1}), \\
H^{\Gamma_2}(2^{e\Gamma_2},3^{c\Gamma_2},6^{d\Gamma_2}),\\
H^{\Gamma_2}(2^{e\Gamma_2},6^{d\Gamma_2}),\\
H^{\Gamma_1'}(5^{e\Gamma_1'}),\\
  H^{\Gamma_2}(5^{e\Gamma_2}).
\end{array}
\]

\begin{obs}
When \(G\) is abelian, the description of the forests in $\mathcal{F}(n,G,V)$ is simplified, since the conjugacy classes of subgroups are singletons.
More precisely, points \((2)\) and \((4)\) can be substituted by the following points:
\begin{itemize}\label{abeliannonbif}
  \item[(2)'] If an internal  vertex \(w\) labelled by the closed subgroup \(P\) is the only direct descendent of a vertex \(v\)  labelled by the closed subgroup \(Q\) then \(P\subsetneq Q\). 
  If a  leaf    is the only direct descendent of a vertex \(v\),  the label of \(v\)  is different from \(\{e\}\).
  \item[(4)'] If an edge stems from a vertex, it is labelled with a coset of the subgroup labeling the vertex. 

\end{itemize}

\end{obs}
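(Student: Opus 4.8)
The plan is to verify, under the standing hypothesis that $G$ is abelian, that conditions $(2)$ and $(4)$ in the definition of $\mathcal{F}(n,G,V)$ are each equivalent to their primed versions, while conditions $(1),(3),(5)$ are left untouched. Everything rests on the elementary observation that in an abelian group every subgroup is normal, so that $gPg^{-1}=P$ for every subgroup $P$ and every $g\in G$; in particular each conjugacy class $[P]$ is the singleton $\{P\}$, and the restriction of the order of $\Lambda(G)$ to the closed subgroups coincides with the inclusion order of $\mathcal K$, as already noted in the excerpt preceding Remark \ref{obsconjugacy}. Consequently $[P]\leq [Q]$ translates into $P\subseteq Q$ and $[P]\lneq [Q]$ into $P\subsetneq Q$.

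For the equivalence $(2)\Leftrightarrow(2)'$ I would simply apply this dictionary: the requirement $[P]\lneq [Q]$ imposed in $(2)$ on the label $P$ of an internal vertex $w$ that is the only direct descendant of a vertex $v$ labelled $Q$ becomes verbatim $P\subsetneq Q$, which is the statement of $(2)'$; the clause about a single leaf descendant (the label of $v$ being different from $\{e\}$) involves no conjugacy and is copied unchanged.

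For $(4)\Leftrightarrow(4)'$ the point is that the side condition ``$a$ such that $a^{-1}Pa\subseteq Q$'' attached in $(4)$ to an edge from $v$ (labelled $Q$) to an internal direct descendant $w$ (labelled $P$) collapses, via $a^{-1}Pa=P$, to the condition $P\subseteq Q$, which is no longer a restriction on the coset $aQ$ but a relation between the two labels. Since a direct internal descendant is in particular a descendant, condition $(1)$ already forces $[P]\leq [Q]$, that is $P\subseteq Q$; hence the side condition is automatically satisfied and may be dropped, so that every coset $aQ$ is admissible --- exactly what $(4)'$ asserts. (For an edge ending in a leaf there is no label $P$ and the ``more precisely'' clause does not apply, so nothing changes there either.)

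The main --- and only mildly delicate --- step is this last verification that the coset constraint in $(4)$ is genuinely redundant once $G$ is abelian: one must invoke condition $(1)$ to see that $P\subseteq Q$ is already guaranteed, for otherwise one might worry that relaxing $(4)$ to $(4)'$ enlarges the family of admissible forests. Granting that, the two substitutions are purely a matter of rewriting $\Lambda(G)$-relations as inclusions, and the claimed simplification of $\mathcal{F}(n,G,V)$ follows.
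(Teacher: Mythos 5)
Your proof is correct and takes essentially the same route as the paper, which justifies the remark only by the one-line observation that conjugacy classes of subgroups are singletons when $G$ is abelian; your translation of $[P]\lneq[Q]$ into $P\subsetneq Q$ and of the coset constraint $a^{-1}Pa\subseteq Q$ into the vacuous (given condition $(1)$) inclusion $P\subseteq Q$ is exactly the intended content. Your extra care in checking that dropping the side condition in $(4)$ does not enlarge the family of forests is a welcome filling-in of a detail the paper does not spell out.
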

In the next section we will provide a formula for the exponential  series that enumerates such forests in the case when \(G\) is abelian. 
\begin{obs}
The  nested sets are the  main ingredient  in the description of a basis of the cohomology of the corresponding De Concini-Procesi model.  As we mentioned in the Introduction, some special functions, the {\em admissible functions}, play a relevant role in this description.  As a further research, one could  investigate if the methods used in this paper can be adapted to find formulas  for the generating series of the Betti numbers. 
\end{obs}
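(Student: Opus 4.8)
The plan is to substantiate the remark by showing concretely how the forest encoding of Section~5 can be upgraded into a generating object for the Poincar\'e polynomial of $Y_{\mathcal{G}}$. First I would recall the precise shape of the De Concini--Procesi / Yuzvinsky basis (\cite{DCP1}, \cite{YuzBasi}): an additive basis of $H^*(Y_{\mathcal{G}},\mathbb{Z})$ is indexed by pairs $(\mathcal{S}, f)$, where $\mathcal{S}$ is a $\mathcal{G}$-nested set and $f$ is an admissible function, i.e. an assignment $A \mapsto f(A) \in \{1, \ldots, d_A - 1\}$ of a positive exponent to each $A \in \mathcal{S}$, the top bound being governed by the local codimension $d_A = \dim A - \dim\big(\sum_{B \in \mathcal{S},\, B \subsetneq A} B\big)$ computed inside $(V^n)^*$. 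Since the monomial attached to $(\mathcal{S}, f)$ sits in cohomological degree $2\sum_{A \in \mathcal{S}} f(A)$, the Poincar\'e polynomial factorises over nested sets as
\[
P(Y_{\mathcal{G}}, q) = \sum_{\mathcal{S} \text{ nested}} \prod_{A \in \mathcal{S}} \Big( q^2 + q^4 + \cdots + q^{2(d_A - 1)} \Big),
\]
so the whole question reduces to reading the local codimensions $d_A$ off the combinatorial data, exactly the kind of data organised by the forests of $\mathcal{F}(n,G,V)$.

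The second step is the key translation: transport this product formula through the bijection of Section~5. Each $A \in \mathcal{S}$ is an internal vertex $v$ labelled by a closed subgroup $K$, and I would compute $d_A$ as a local forest quantity using the codimension formula already established for the arrangement, $\operatorname{codim}_{V^n} H^{K}(i_1^{g_1K}, \ldots, i_k^{g_kK}) = k \dim V - \dim \Fix(K)$. Concretely, if the subspace attached to $v$ is $A'$ and those attached to its direct descendant internal vertices $w_1, \ldots, w_m$ are $B'_1, \ldots, B'_m$, then $d_A$ equals the difference $\operatorname{codim}_{V^n}(A') - \operatorname{codim}_{V^n}(\bigcap_t B'_t)$, which by annihilator duality is precisely the local codimension above. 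Weighting each internal vertex $v$ by the polynomial factor $q^2 + \cdots + q^{2(d_v - 1)}$ turns the unweighted enumeration of Sections~6--7 into a $q$-weighted one whose total specialises, at $q = 1$, to the series of Theorems~\ref{teogstgenerale} and \ref{teogst}.

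The third step is to feed these local weights into the same exponential-formula machinery that proves Theorems~\ref{teogstgenerale}, \ref{teogst} and \ref{formulaHtrees}. The structural decomposition of a forest into trees, and of a tree into a root together with the forest of its principal subtrees, is compatible with multiplicative vertex weights, so each recursion used to derive the exponential generating series simply acquires a polynomial-in-$q$ coefficient in place of an integer count. In the abelian case, where by the Remark following the forest construction the conjugacy-class bookkeeping collapses and labels are honest subgroups, I expect these recursions to close over $\mathbb{Z}[q]$ exactly as in the unweighted setting.

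The hard part will be Step~2: obtaining a clean closed form for $d_v$ purely in terms of the label of $v$ and the labels and number of its direct descendants. The subtlety is that $d_v$ measures the codimension of the intersection of the children's subspaces inside the parent's subspace, and this depends on how the fixed spaces $\Fix(P_t)$ of the descendants sit inside $\Fix(K)$; isolating a contribution of $\dim V$ per additional branch together with a correction of the form $\dim\Fix(K) - \dim\Fix(P_t)$ requires the direct-sum decomposition guaranteed by the nested-set conditions of Proposition~\ref{propv1} and Remark~\ref{obsconjugacy}. A secondary obstacle is the non-abelian case: although the admissibility bounds are insensitive to the coset labels on the edges, the underlying enumeration is not, so the weighted series there will still need the conjugacy-class refinements that already complicate the unweighted count.
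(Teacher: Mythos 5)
You were asked to prove something that the paper does not prove: the statement is a closing \emph{Remark}, not a theorem. It merely recalls from \cite{DCP1} and \cite{YuzBasi} that integer cohomology bases of wonderful models are indexed by nested sets equipped with admissible functions, and then explicitly poses the generating series of Betti numbers as \emph{future research}. So there is no proof in the paper to compare yours against; what you have written is a research programme for the open question the remark raises. Within that programme, your Step 1 is consistent with the facts the paper cites: the basis of $H^*(Y_{\mathcal G},\mathbb{Z})$ is indexed by pairs $(\mathcal S,f)$ with $1\le f(A)\le d_{A,\mathcal S}-1$, and the Poincar\'e polynomial factors over nested sets as you wrote. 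Moreover, the part you flag as hard (Step 2) is actually the easy part: since the leaf sets of distinct children of an internal vertex $v$ are disjoint, codimensions add over children, and using $\operatorname{codim}_{V^n} H^{K}(i_1^{g_1K},\ldots,i_k^{g_kK})=k\dim V-\dim\Fix(K)$ one gets the clean local formula
\[
d_v \;=\; \ell\,\dim V \;+\; \sum_{t=1}^{m}\dim\Fix(P_t)\;-\;\dim\Fix(K),
\]
where $K$ is the label of $v$, $\ell$ is its number of leaf children, and $P_1,\ldots,P_m$ are the labels of its internal children. The nested conditions of Proposition \ref{propv1} and Remark \ref{obsconjugacy} enter only in guaranteeing this additivity.

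Two genuine gaps remain. First, your specialization claim is wrong: at $q=1$ the factor $q^2+\cdots+q^{2(d_v-1)}$ becomes $d_v-1$, so the weighted series at $q=1$ computes the total Betti number, \emph{not} the nested-set enumeration of Theorems \ref{teogstgenerale} and \ref{teogst}; the unweighted count is recovered only by formally replacing each vertex factor by $1$. Second, and more seriously, Step 3 underestimates the incompatibility between the local weights and the paper's gluing machinery. The weight $d_v$ depends on how many children of $v$ are leaves versus internal vertices, and on the labels of the latter; but the operator $s_H=s+\sum_{H\subsetneq K}\frac{\partial}{\partial t_K}$ in Theorem \ref{teogstgenerale} acts by turning a leaf of an already-counted $K$-tree into an internal child labelled $H$, which changes the weight of an \emph{existing} vertex (a $\dim V$ contribution becomes a $\dim\Fix(H)$ contribution). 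Hence the recursions of Theorems \ref{teogstgenerale} and \ref{formulaHtrees} do not ``simply acquire a polynomial-in-$q$ coefficient'': one would have to refine $\lambda_K$ and the partition encoding of Section \ref{secHtrees} so as to track, at each internal vertex, the multiset of child types (for instance with auxiliary variables recording attachment slots by subgroup), and only then close the weighted recursion. This refinement is precisely the open work the remark points to, and your proposal, while a reasonable outline, does not carry it out.
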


\section{Enumeration of the nested sets in the case when \(G\) is abelian}

In this section and in the next one we will assume \(G\) abelian.

Given an oriented  forest \(F\) in $\mathcal{F}(n,G,V)$, we associate to it a set of subforests, selected according to the labels of its vertices, that are  closed subgroups of \(G\). Let us consider a closed subgroup \(H\) that appears among the labels of the vertices of \(F\). The subforest associated to \(H\) is provided by all the   vertices of \(F\) labelled by \(H\) and by all the labelled edges that go out (according to the orientation) from these vertices. If the other end of one of these edges is (in \(F\)) a vertex \(w\) which is labelled by a subgroup different from \(H\) or is a leaf, then \(w\) will be an unlabelled leaf of our subforest.
We also consider as a subforest the set of the leaves that are disconnected from the rest of the forest (we will call them {\em fallen leaves}). For instance in the Figure \ref{foresta1} the leaves 1 and 8 are fallen leaves
and  the  decomposition into subforests is the following:
\begin{figure}[H]
\centering
  \begin{tikzpicture}[scale=1]

     \draw (-0.5,1) node[anchor=east]  {\footnotesize{$\Gamma_1$}}; 
     \draw (0,2) node[anchor=south]  {\footnotesize{$\Gamma_1$}}; 
     \draw (-0.75,0.5) node[anchor=east]  {\footnotesize{$e\Gamma_1$}}; 
     \draw (-0.25,0.5) node[anchor=west]  {\footnotesize{$b\Gamma_1$}}; 
     \draw (-0.25,1.5) node[anchor=east]  {\footnotesize{$a\Gamma_1$}}; 
     \draw (0.25,1.5) node[anchor=west]  {\footnotesize{$e\Gamma_1$}}; 
     \draw (2.5,2) node[anchor=south]  {\footnotesize{$e\Gamma_1'$}}; 
     \draw (2.5,1) node[anchor=east]  {\footnotesize{$\Gamma_1'$}}; 
     \draw (5,2) node[anchor=south]  {\footnotesize{$\Gamma_2$}}; 
     \draw (5.5,1) node[anchor=west]  {\footnotesize{$\Gamma_2$}}; 
     \draw (7,2) node[anchor=south]  {\footnotesize{$\Gamma_2$}}; 
     \draw (4.5,1) node[anchor=east]  {\footnotesize{$c\Gamma_2$}}; 
     \draw (6.05,1.5) node[anchor=east]  {\footnotesize{$e\Gamma_2$}}; 
     \draw (5.25,0.5) node[anchor=east]  {\footnotesize{$d\Gamma_2$}}; 
     \draw (5.75,0.5) node[anchor=west]  {\footnotesize{$e\Gamma_2$}}; 
     \draw (7,1) node[anchor=east]  {\footnotesize{$e\Gamma_2$}}; 
     \draw (8.5,1) node[anchor=east]  {\footnotesize{$G$}}; 
     \draw (8.5,2) node[anchor=south]  {\footnotesize{$G$}}; 
     \draw (10.5,1) node[anchor=south]  {\footnotesize{}}; 
     \draw (11,1) node[anchor=south]  {\footnotesize{}}; 
\foreach \Point in {(-1,0), (0,0),(-0.5,1),(0,2), (0.5,1) ,
(2.5,0),(2.5,2),(4,0),(5,0),(5,2),(5.5,1),(6,0),(7,0),(7,2),(8.5,0),(8.5,2),(10.5,0),(11,0)}{
    \node at \Point {\textbullet};
}
    \draw[thick] (-0.5,1) -- +(-0.5,-1); 
    \draw[thick] (-0.5,1) -- +(0.5,-1); 
    \draw[thick] (-0.5,1) -- +(0.5,1); 
    \draw[thick] (0,2) -- +(0.5,-1); 
    \draw[thick] (2.5,0) -- +(0,2); 
    \draw[thick] (5.5,1) -- +(-0.5,-1); 
    \draw[thick] (5.5,1) -- +(0.5,-1); 
    \draw[thick] (5.5,1) -- +(-0.5,1); 
    \draw[thick] (5,2) -- +(-1,-2); 
    \draw[thick] (7,0) -- +(0,2); 
    \draw[thick] (8.5,0) -- +(0,2); 
  \end{tikzpicture}
\caption{}
\end{figure}

Given a closed subgroup  $H\neq G$, let us now  focus on the \(H\)-forests, i.e. the forests whose  internal vertices have the same label \(H\); in particular we are interested in counting all the forests of this type whose \(n\) leaves are labelled by a fixed set of \(n\) numbers.

 We will call $H$-tree a rooted oriented  tree whose internal vertices  are labelled with the subgroup $H$, while the edges are labelled with  cosets of $H$ and the leaves are labelled by a fixed set of  numbers.

 Let us denote by $\lambda_H(t_H)$ the exponential series that counts the number of possible $H$-trees with labelled leaves:
 \begin{equation}\label{gamma}
\lambda_H(t_H)=\sum_{i\geq 1}\frac{\lambda_{H,i}t_H^i}{i!}
\end{equation}
where $\lambda_{H,i}$ is the number of $H$-trees with $i$ leaves. A formula for this series will be shown in Section \ref{secHtrees}.

Suppose    that we want to compute the number of forest with three $H$-trees, having respectively $l_1$, $l_2$ and $l_3$ leaves, with \(l_1>l_2>l_3\). This is
\[
\binom{l_1+l_2+l_3}{l_1}\lambda_{H,l_1}\binom{l_2+l_3}{l_2}\lambda_{H,l_2}\lambda_{H,l_3}.
\]
The binomial coefficients are needed to assign the $l_1+l_2+l_3$ labels to the three trees. We then have
  \[
  \lambda_{H,l_1}\frac{t_H^{l_1}}{l_1!} \lambda_{H,l_2}\frac{t_H^{l_2}}{l_2!} \lambda_{H,l_3}\frac{t_H^{l_3}}{l_3!}=
  \lambda_{H,l_1}\lambda_{H,l_2}\lambda_{H,l_3}\binom{l_1+l_2+l_3}{l_1}\binom{l_2+l_3}{l_2}\frac{t_H^{l_1+l_2+l_3}}{(l_1+l_2+l_3)!}.
  \]
This elementary observation can be easily generalized to show that    the   $H$-forests with three $H$-trees are counted by the series  $\frac{(\lambda_H(t_H))^3}{3!}$. 

Therefore  the contribution given by the forests whose internal vertices are labelled with $H$  is
\[
\sum_{n\geq 0} \frac{(s\lambda_H(t_H))^n}{n!}=e^{s\lambda_H(t_H)}.
\]
Here we also added the new variable $s$ that counts the number of connected components (trees) of the forest: the coefficient of the monomial $\frac{s^kt_H^j}{j!}$ is the number of forests with $k$ trees and $j$ leaves.

\medskip

Let us now focus on the following question: how many are the forests in $\mathcal{F}(n,G,V)$ whose decomposition into subforests is given by an  $H$-tree with $j$ leaves and a $K$-tree with $i$ leaves, where \(H\subsetneq K\)?

We know  that there are $\lambda_{K,i}$ different $K$-trees with $i$ leaves and $\lambda_{H,j}$ $H$-trees with $j$ leaves,  then we have  the following cases:
\begin{itemize}
  \item the forest has two trees;  the number of such forests is $\lambda_{K,i}\lambda_{H,j}\binom{i+j}{i}$;
  \item the forest is given by a single tree. We can think of this forest as if  it was obtained by glueing   the root of an $H-$tree with $j$  leaves with one of the leaves  of a  $K-$tree with \(i\) leaves. In this case the final number of leaves of the forests is $i+j-1$ and the number of all these forests is $\binom{i+j-1}{j}\lambda_{K,i}\lambda_{H,j}$.
\end{itemize}
Now we observe that
\[
(s+\frac{\partial}{\partial t_K})\frac{\lambda_{K,i}t_K^i}{i!}
s\frac{\lambda_{H,j}t_K^j}{j!}=s^2\lambda_{K,i}\lambda_{H,j}\binom{i+j}{j}\frac{t_K^it_H^j}{(i+j)!}+s\lambda_{K,i}\lambda_{H,j}\binom{i+j-1}{j}\frac{t_K^{i-1}t_H^{j}}{(i+j-1)!}.
\]
Therefore the exponential series that computes the number of forests whose decomposition is given by one $K$-tree and one $H$-tree is:
\begin{equation}
\label{casobase}
\lambda_{H,K}=s_H\lambda_H(t_H)s\lambda_K(t_K)
\end{equation}
where $s_H=s+\frac{\partial}{\partial t_K}$ and   $s$  again  counts the number of connected components. More precisely,   the coefficient of the monomial \(s^k\frac{t_K^it_H^j}{(i+j)!}\)  is the number of forests with $k$ trees,  \(i\) leaves descending from vertices labelled by \(H\) and  $j$ leaves descending from vertices labelled by \(K\). 

\medskip
Let $\mathcal{F}'(n,G,V)$ be  the subset of $\mathcal{F}(n,G,V)$ given by the forests whose decomposition does not contain \(G\)-trees and let $\mathcal{F}''(n,G,V)$ be  the subset of $\mathcal{F}'(n,G,V)$ given by the forests whose decomposition does not contain fallen leaves.    Putting  \(\mathcal H=\mathcal K-\{G\}\)  we   denote by
\begin{enumerate}
\item \(\gamma''(j, (a_H)_{H\in \mathcal H})\)   the number of forests in $\mathcal{F}''(n,G,V)$  that have \(j\) connected components and     $k=\sum_{H\in \mathcal H}a_H$ leaves such that, for every \(H\in \mathcal H\) there are \(a_H\) leaves attached to \(H\)-trees;

\item \(\gamma'(h, j, (a_H)_{H\in \mathcal H})\)   the number of forests in $\mathcal{F}'(n,G,V)$  that have \(h\geq 0\) fallen leaves, \(j\) (with \( j> h\)) connected components and     $k=\sum_{H\in \mathcal H}a_H$ leaves such that, for every \(H\in \mathcal H\) there are \(a_H\) leaves attached to \(H\)-trees.

\end{enumerate}
 We will consider the following series:
\[\tilde{\Gamma}= \tilde{\Gamma}(s, (t_H)_{H\in \mathcal H})
=1+\sum_{
\begin{array}{c}
   j, (a_H)_{H\in \mathcal H} \\

\hbox {s.t.} \ \sum_{H\in \mathcal H}a_H\geq 1
\end{array}} \gamma''(j,(a_H)_{H\in \mathcal H})s^j\frac{\prod_{H\in \mathcal H}t_H^{a_H}}{(\sum_{H\in \mathcal H}a_H)!}\]
\[ \overline{\Gamma}= \overline{\Gamma}(s, t, ( t_H)_{H\in \mathcal H})
=\sum_{
\begin{array}{c}
   h, j, (a_H)_{H\in \mathcal H} \\

 j>h\geq 0 \\
 \sum_{H\in \mathcal H}a_H\geq 1
\end{array}} \gamma'(h, j,(a_H)_{H\in \mathcal H})s^j\frac{t^h\prod_{H\in \mathcal H}t_H^{a_H}}{(h+\sum_{H\in \mathcal H}a_H)!}.\]

\begin{teo}
\label{teogstgenerale}
We have
\begin{equation}
\label{primaformula}\tilde{\Gamma}(s,(t_H)_{H\in\mathcal{H}})=\prod_{H\in\mathcal{H}}e^{s_H\lambda_{H}{(t_H)}}
\end{equation}\begin{equation}
\label{secondaformula} \overline{\Gamma}(s, t, ( t_H)_{H\in \mathcal H})=e^{st}(\tilde{\Gamma}-1)
\end{equation}
where for every \(H\in \mathcal H\) we put ${\displaystyle s_H=(s+\sum_{
\begin{array}{c}
   K\in\mathcal{H} \\

H\subsetneq K
\end{array}}\frac{\partial}{\partial t_K})}$.

\end{teo}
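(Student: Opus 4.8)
The plan is to read the right-hand side of \eqref{primaformula} as a composition of operators acting on the ring of formal power series \(\mathbb{Q}[s][[t_H:H\in\mathcal H]]\) and evaluated on the constant \(1\). For each \(H\in\mathcal H\) set \(\hat A_H:=s_H\,\lambda_H(t_H)\), where \(\lambda_H(t_H)\) acts by multiplication and \(s_H=s+\sum_{K\supsetneq H}\partial/\partial t_K\). First I would record that \(s_H\) commutes with multiplication by \(\lambda_H(t_H)\), since \(\lambda_H\) depends only on \(t_H\) while \(s_H\) differentiates only the variables \(t_K\) with \(K\supsetneq H\); hence \(\hat A_H\) is unambiguous. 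Likewise \(\hat A_H\) and \(\hat A_{H'}\) commute whenever \(H,H'\) are incomparable, because no differentiation variable occurring in one can coincide with the multiplication variable of the other. Consequently the product is well defined once we agree to apply the operators to \(1\) processing \emph{larger subgroups first}: fix a total order \(H_1,\dots,H_m\) of \(\mathcal H\) refining inclusion with the largest subgroups first (so \(H_1\) is maximal and each prefix \(\{H_1,\dots,H_\ell\}\) is a filter of \((\mathcal H,\subseteq)\)). Any two such orders differ by transpositions of incomparable elements, so the resulting series is independent of the choice.

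Next I would isolate the two building blocks already available. The exponential formula gives that \(e^{s\lambda_H(t_H)}\) enumerates the \(H\)-forests, with \(s\) marking connected components and \(t_H\) marking leaves. The base computation \eqref{casobase} shows exactly how the operator \(s_H\) realizes the two possible fates of the root of an \(H\)-tree: the summand \(s\) starts a new connected component, while each summand \(\partial/\partial t_K\) with \(K\supsetneq H\) glues that root onto a leaf of an already present \(K\)-tree, consuming one free \(K\)-leaf (the differentiation lowers the \(t_K\)-degree by one) and redistributing the labels through the binomial factor carried by the exponential generating functions.

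The core is then an induction following the chosen order. Writing \(\Gamma_0=1\) and \(\Gamma_\ell=e^{\hat A_{H_\ell}}(\Gamma_{\ell-1})\), I claim that \(\Gamma_\ell\) is the generating series of the forests in \(\mathcal F''(n,G,V)\) all of whose vertex labels lie in the filter \(\{H_1,\dots,H_\ell\}\), with \(s\) counting trees and each \(t_{H_i}\) counting the leaves still descending from \(H_i\)-vertices. The base case \(\Gamma_1=e^{s\lambda_{H_1}(t_{H_1})}\) is the exponential formula, since \(H_1\) is maximal and \(s_{H_1}=s\). For the inductive step, applying \(e^{\hat A_{H_{\ell+1}}}\) superposes an arbitrary set of \(H_{\ell+1}\)-trees on the structure counted by \(\Gamma_\ell\); because \(\{H_1,\dots,H_\ell\}\) is a filter, every \(K\supsetneq H_{\ell+1}\) has already been processed, so all admissible attachment points (leaves of larger-subgroup trees) are present, and the operator \(s_{H_{\ell+1}}=s+\sum_{K\supsetneq H_{\ell+1}}\partial/\partial t_K\) distributes each new root between ``new component'' and ``glue to a free \(K\)-leaf'' exactly as in \eqref{casobase}. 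That the exponential treats an arbitrary number of such trees simultaneously, with correct labelled multiplicities, follows from the Leibniz rule applied to \(e^{\hat A_{H_{\ell+1}}}\) together with the binomial bookkeeping of the one-tree case; taking \(\ell=m\) yields \eqref{primaformula}. I expect this verification to be the main obstacle: one must check that differentiating \(e^{\hat A_{H_{\ell+1}}}\) against the pre-existing \(t_K\)-dependence neither double-counts several \(H_{\ell+1}\)-trees glued to the same \(K\)-tree nor omits any, and that the normalization by \((\sum_H a_H)!\) is preserved at every stage; the non-commutativity of the operators is precisely what forces the ``larger first'' order so that this bookkeeping stays consistent.

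Finally, for \eqref{secondaformula} I would argue by the product rule for exponential generating functions. A forest in \(\mathcal F'(n,G,V)\) is the disjoint union of its set of fallen leaves and a forest of \(\mathcal F''\)-type containing at least one genuine tree, which is exactly the condition \(j>h\). A set of \(h\) labelled isolated leaves, each its own connected component, is enumerated by \(e^{st}\) (with \(t\) marking fallen leaves and \(s\) marking their components), while the genuine-tree part is enumerated by \(\tilde\Gamma-1\), the \(-1\) removing the empty forest. Since the two kinds of leaves use disjoint variables and the labels are interleaved in all possible ways, the product principle for labelled structures gives \(\overline\Gamma=e^{st}(\tilde\Gamma-1)\), as claimed.
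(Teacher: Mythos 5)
Your proposal is correct and follows essentially the same route as the paper: both prove \eqref{primaformula} by building forests inductively, attaching trees labelled by smaller subgroups onto forests labelled by larger ones, with the operator $s_H = s + \sum_{K\supsetneq H}\partial/\partial t_K$ encoding the choice between starting a new component and gluing onto a free $K$-leaf, and both obtain \eqref{secondaformula} by multiplying by $e^{st}$ for the fallen leaves and subtracting the all-fallen-leaves contribution. Your only divergence is organizational: you induct along a linear extension of $(\mathcal{H},\subseteq)$ processed from largest to smallest, making the ordering and commutation properties of the operators $e^{s_H\lambda_H(t_H)}$ explicit, whereas the paper inducts on the number $|\mathcal{H}_F|$ of labels occurring in a forest and strips a minimal one --- a sharpening of the same argument rather than a different one.
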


\begin{proof} Given a forest \(F\in \mathcal{F}'(n,G,V)\) let us denote by \(\mathcal H_F\) the subset of \(\mathcal H\) given by all the subgroups \(H\) such that in the decomposition of \(F\) there is a \(H\)-tree. The proof of the formula (\ref{primaformula}) is    by induction on the following proposition: {\em the formula (\ref{primaformula})  enumerates correctly the forests \(F\) such that  \(|\mathcal H_F|=m\)}.

The case \(m=1\) is trivial.
Suppose that the proposition holds for \(m\geq 1\) and let \(S\) be a forest with   \(|\mathcal H_S|=m+1\). Let \(H\) be a minimal element (with respect to inclusion) of  \(\mathcal H_S\). Then we can think of \(S\) as obtained from  a forest \(S'\)  with \(|\mathcal H_{S'}|=m\) by adding some new connected components made by \(H\) trees and/or by attaching some \(H\) trees to some of the leaves.
The formula (\ref{casobase}) indicates  how to count all the forests obtained  in this way: the only difference with that simpler example is that now we use ${\displaystyle s_H=(s+\sum_{
\begin{array}{c}
   K\in\mathcal{H} \\

H\subsetneq K
\end{array}}\frac{\partial}{\partial t_K})}$, where every formal differential \(\frac{\partial}{\partial t_K}\) takes into account the attachments of the \(H\)-tree to a leaf of  a \(K\)-tree, with \(H\subsetneq K\).

In order to prove formula (\ref{secondaformula}) we need to take into account  the fallen leaves. These are labelled leaves  disconnected from the rest of the graph, so we have    $\overline{\Gamma}(s, t, ( t_H)_{H\in \mathcal H})=e^{st}\tilde{\Gamma}-e^{st}=e^{st}(\tilde{\Gamma}-1)$. We removed from the computation (by subtracting \(e^{st}\)) the contribution of the forests whose only components are fallen leaves, since    they do not belong to $\mathcal{F}'(n,G,V)$ (they do not represent a nested set). \end{proof}

Now we want to take into account  the forests \(F\in \mathcal{F}(n,G,V)\) such that $G\in {\mathcal H}_F$.
Let \(\gamma(j, n)\)   be the number of forests in $\mathcal{F}(n,G,V)$  that have  \(j\) (with \( j> 0\)) connected components and     $n>0$ leaves.
We want to give a formula for the series
\[ \mathcal{G}(s,t)= \sum_{j>0, n>0}\gamma(j, n)s^j\frac{t^n}{n!}. \]

We start recalling  that given a forest \(F\in \mathcal{F}(n,G,V)\), no more than one tree of \(F\) can have $G$-labelled internal vertices and that the $G$-labelled vertices form a path in the oriented tree.

Then we denote by $\tilde \Gamma(s,t)$ (resp. $\overline{\Gamma(s,t)}$) the  series $\tilde{\Gamma}(s,(t_H)_{H\in\mathcal{H}}$)  (resp. $\overline{\Gamma}(s,(t_H)_{H\in\mathcal{H}})$) evaluated  in $t_K=t$, for all $K\in\mathcal{H}$.
We also put  $\Gamma(s,t)=e^{st}\tilde{\Gamma}(s,t)$.

Let us  now count for instance the number of trees in $\mathcal{F}(n,G,V)$ with two  $G$-vertices $\alpha$ and $\beta$ (see Figure \ref{fig}).

\begin{figure}[H]
\centering
  \begin{tikzpicture}[scale=.8]
     \draw (3,3.1) node[anchor=east]  {$\beta$};
     \draw (5.5,4.6) node[anchor=east]  {$\alpha$};
     \draw (0.5,0.2) node[anchor=north]  {$\ldots$};
     \draw (3.5,0.2) node[anchor=north]  {$\ldots$};
     \draw (7.5,0.2) node[anchor=north]  {$\ldots$};
    \foreach \x in {1.5}
    \draw[xshift=\x cm,thick, black,fill=white] (\x cm,3) circle (0.1);
    \foreach \x in {2.75}
    \draw[xshift=\x cm,thick, black,fill=white] (\x cm,4.5) circle (0.1);
\foreach \Point in {(0,0), (1,0),(3,0),(4,0), (7,0) , (8,0),(9.5,0),(10.5,0),(0.5,1.5),(3.5,1.5),(7.5,1.5),(8,3)}{
    \node at \Point {\textbullet};
}
    \draw[thick] (0.5,1.5) -- +(-0.5,-1.5);
    \draw[thick] (0.5,1.5) -- +(+0.5,-1.5);
    \draw[thick,dashed] (0.5,1.5) -- +(+0.2,-1.5);
    \draw[thick,dashed] (0.5,1.5) -- +(-0.2,-1.5);
    \draw[thick] (0.5,1.5) -- +(2.5,1.5);
    \draw[thick] (3.5,1.5) -- +(-0.5,-1.5);
    \draw[thick] (3.5,1.5) -- +(+0.5,-1.5);
    \draw[thick] (3.5,1.5) -- +(-0.5,1.5);
    \draw[thick,dashed] (3.5,1.5) -- +(+0.2,-1.5);
    \draw[thick,dashed] (3.5,1.5) -- +(-0.2,-1.5);
    \draw[thick] (7.5,1.5) -- +(-0.5,-1.5);
    \draw[thick] (7.5,1.5) -- +(+0.5,-1.5);
    \draw[thick] (7.5,1.5) -- +(0.5,1.5);
     \draw[thick,dashed] (7.5,1.5) -- +(+0.2,-1.5);
    \draw[thick,dashed] (7.5,1.5) -- +(-0.2,-1.5);
     \draw[thick] (8,3) -- +(1.5,-3);
    \draw[thick] (8,3) -- +(2.5,-3);
    \draw[thick] (5.5,4.5) -- +(2.5,-1.5);
    \draw[thick] (5.5,4.5) -- +(-2.5,-1.5);
\draw [thick,decorate,decoration={brace,amplitude=4.4pt,mirror},xshift=0.4pt,yshift=-0.4pt](-0.2,-0.2) -- (4.2,-0.2) node[black,midway,yshift=-0.6cm] {$p$ leaves};
\draw [thick,decorate,decoration={brace,amplitude=3.9pt,mirror},xshift=0.4pt,yshift=-0.4pt](6.8,-0.2) -- (10.7,-0.2) node[black,midway,yshift=-0.6cm] {$k$ leaves};
  \end{tikzpicture}
  \caption{}
   \label{fig}
  \end{figure}

As a first step we imagine  to delete from the graph all the edges stemming from $\alpha$ or $\beta$. Then let us consider the  following two subforests (with no \(G\)-vertices)  which we suppose to be nonempty: the one made by the trees  whose roots were connected by an edge to $\alpha$ and the one  made by  the  trees whose roots were connected by an edge to $\beta$. We will call these respectively the higher forest and the lower forest and we suppose that they have respectively   $k>0$ and $p>0$ leaves.

The labels for the leaves in the higher forest can be chosen in $\binom{k+p}{k}$ ways.

The number of possible higher forests is the coefficient \(a\) of \(\frac{t^k}{k!}\) in $\Gamma (1,t)$ and the number of possible  lower forests is the coefficient \(b\) of \(\frac{t^p}{p!}\). Notice  that we are evaluating $\Gamma(s,t)$ in $s=1$ since in this computation  the number of connected components is not relevant (we are subdividing  a tree). The number of trees described in the example is therefore
\[
\binom{k+p}{k}ab.
\]

An immediate  generalization of the same reasoning shows  that  the exponential generating series for the cardinality  of the set of trees with $l$ $G$-vertices is computed by the series $(\Gamma(1,t)-1)^l$.
We are now ready to compute the exponential series that enumerates  the  trees with at least one $G$-vertex. This is: 
\[
\Phi(t)=(\Gamma(1,t)-1)+(\Gamma(1,t)-1)^2+\ldots+(\Gamma(1,t)-1)^n\ldots=\frac{1}{2-\Gamma(1,t)}-1.
\]

Now, since  a forest has at most one connected component that contains \(G\)- vertices, we obtain: 
\begin{teo} 
\label{teogst}The following formula for \(\mathcal{G}(s,t)\) holds:
\[\mathcal{G}(s,t)=s(\frac{1}{2-\Gamma(1,t)}-1)\Gamma(s,t)+\overline{\Gamma(s,t)}.\]
The number of nested sets in $L_{G,n}$ is the coefficient of $\frac{t^n}{n!}$ in $\mathcal{G}(1,t)$.
\end{teo}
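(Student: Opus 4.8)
The plan is to partition $\mathcal{F}(n,G,V)$ (equivalently, by the bijection of Section 5, the set of $\mathcal{G}$-nested sets) according to whether the label $G$ occurs, writing it as the disjoint union of the forests whose decomposition contains a $G$-tree and those whose decomposition does not. I would count the two pieces separately and add the resulting exponential series. The second piece is by definition $\mathcal{F}'(n,G,V)$, so its contribution is exactly the series $\overline{\Gamma}$ of Theorem \ref{teogstgenerale}, specialized by setting $t_K=t$ for every $K\in\mathcal{H}$; this already enumerates all $G$-free forests, including those carrying fallen leaves, and excludes the degenerate configurations made only of fallen leaves (which do not represent nested sets). This is precisely why the summand $\overline{\Gamma(s,t)}$ occurs unchanged in the formula.

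For the first piece I would invoke the two structural facts recalled just before the statement: a forest has at most one connected component containing $G$-labelled vertices, and in that component the $G$-vertices form a path, so that the generating series of single trees with at least one $G$-vertex is $\Phi(t)=\frac{1}{2-\Gamma(1,t)}-1$. Given such a forest, I would split it canonically into its unique $G$-tree together with the remaining components, which form an arbitrary $G$-free forest — possibly empty, and possibly consisting only of fallen leaves. These remaining components are therefore enumerated by $\Gamma(s,t)=e^{st}\tilde{\Gamma}(s,t)$, whose constant term $1$ correctly records the case where the forest reduces to the single $G$-tree. Since the $G$-tree is one connected component it contributes a single factor $s$, and the exponential-formula product rule assigns the ways of distributing the $n$ leaf labels between the two parts automatically; thus the first piece is counted by $s\,\Phi(t)\,\Gamma(s,t)=s\left(\frac{1}{2-\Gamma(1,t)}-1\right)\Gamma(s,t)$.

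Adding the two contributions yields
\[
\mathcal{G}(s,t)=s\left(\frac{1}{2-\Gamma(1,t)}-1\right)\Gamma(s,t)+\overline{\Gamma(s,t)},
\]
as claimed. The final assertion then follows by specializing $s=1$: since $\mathcal{G}(1,t)=\sum_{n>0}\left(\sum_{j>0}\gamma(j,n)\right)\frac{t^n}{n!}$, the coefficient of $\frac{t^n}{n!}$ is the total number of forests in $\mathcal{F}(n,G,V)$, which by the bijection of Section 5 is the number of $\mathcal{G}$-nested sets, i.e. the number of nested sets of $L_{G,n}$.

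I expect the main obstacle to be the careful bookkeeping in the first piece: checking that the split ``$G$-tree plus rest'' is a genuine bijection onto (single $G$-tree) $\times$ ($G$-free forest), so that every boundary case is hit exactly once — the empty rest (handled by the constant term of $\Gamma$), a rest consisting solely of fallen leaves (legitimate here, in contrast with $\overline{\Gamma}$), and the requirement that each $G$-vertex along the path still carry a nonempty non-$G$ subforest, which is what makes $\Phi$ the correct series — together with verifying that a single factor $s$, rather than a higher power, records the one extra connected component contributed by the $G$-tree.
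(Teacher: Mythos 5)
Your proposal is correct and follows essentially the same route as the paper: you split $\mathcal{F}(n,G,V)$ according to whether a $G$-tree occurs, count the unique $G$-component by $\Phi(t)=\frac{1}{2-\Gamma(1,t)}-1$ (justified exactly as in the paper, by the $G$-vertices forming a path each carrying a nonempty $G$-free forest), attach the remaining $G$-free part via $\Gamma(s,t)$ with a single factor $s$, and account for the $G$-free forests by $\overline{\Gamma(s,t)}$. Your explicit bookkeeping of the boundary cases (empty rest via the constant term of $\Gamma$, rests made only of fallen leaves being allowed in the first summand but excluded from $\overline{\Gamma}$) is exactly the content implicit in the paper's choice of $\Gamma$ versus $\overline{\Gamma}$, so nothing is missing.
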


\begin{obs}
 A  reader who is familiar with  the  theory of species of structures may have noticed that, 
  given a finite group representation $\rho:G\rightarrow GL(V)$ and a set of cardinality $n$, the family of forests $\mathcal{F}(n,G,V)$ is a species of structure. 
 From this point of view  our computation is in the same spirit of the results  in Chapter 3 of \cite{Berg}.
\end{obs}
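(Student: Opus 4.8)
The plan is to verify the defining conditions of a species of structures in Joyal's sense, that is, the functoriality of the assignment $U\mapsto\mathcal{F}[U]$ under relabelling bijections of the leaf set. The cleanest route is to observe that the entire construction of Sections~4 and~5 makes sense with the index set $\{1,\dots,n\}$ replaced by an arbitrary finite set $U$ and is natural in $U$. First I would note that nothing in the definition of the arrangement uses the linear order on $\{1,\dots,n\}$: for any finite set $U$ one forms $V^U$ (the space of functions $U\to V$) and the subspaces $H(i,j,g)=\{v\mid v_j=\rho(g)v_i\}$ for $i\neq j$ in $U$, together with $H(i,i,g)$ for $i\in U$ and $g\neq e$. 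This yields an arrangement $\mathcal{H}(U,G,V)$, an intersection lattice $L(U,G,V)$, the building set $\mathcal{G}_U$ of Section~5, and the associated poset of $\mathcal{G}_U$-nested sets, all defined without reference to any order on $U$; note that the group $G$, its closed subgroups $\mathcal K$, and the $\Fix$-conditions are untouched by the choice of index set.

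Second, a bijection $\sigma\colon U\to U'$ induces a linear isomorphism $\sigma_*\colon V^U\xrightarrow{\sim}V^{U'}$ by $(\sigma_* v)_{\sigma(i)}=v_i$, and directly from the defining equations one checks $\sigma_*\big(H(i,j,g)\big)=H(\sigma(i),\sigma(j),g)$ and likewise on the diagonal subspaces. Hence $\sigma_*$ carries $\mathcal{H}(U,G,V)$ onto $\mathcal{H}(U',G,V)$ subspace by subspace, induces a poset isomorphism $L(U,G,V)\cong L(U',G,V)$ sending $\mathcal{G}_U$ onto $\mathcal{G}_{U'}$, and therefore sends $\mathcal{G}_U$-nested sets bijectively to $\mathcal{G}_{U'}$-nested sets. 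Defining $\mathcal{F}[U]$ to be the set of nonempty $\mathcal{G}_U$-nested sets (equivalently, those forests with at least one internal vertex) and $\mathcal{F}[\sigma]$ to be the induced bijection, the functoriality identities $\mathcal{F}[\mathrm{id}_U]=\mathrm{id}$ and $\mathcal{F}[\tau\circ\sigma]=\mathcal{F}[\tau]\circ\mathcal{F}[\sigma]$ follow at once from $(\mathrm{id}_U)_*=\mathrm{id}$ and $(\tau\circ\sigma)_*=\tau_*\circ\sigma_*$. This exhibits $U\mapsto\mathcal{F}[U]$ as a species.

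Third, I would transport this structure along the bijection between nested sets and labelled forests of Section~5, so that $\mathcal{F}[U]$ is identified with the set of forests whose leaves are labelled by $U$ and $\mathcal{F}[\sigma]$ with the operation that relabels the leaves by $\sigma$. The one point requiring care, and the main obstacle, is that rules (4) and (5) fix the edge cosets by a normalization that refers to ``the smallest leaf'', hence to the order on the label set; after relabelling the distinguished leaf may change, and the cosets must be re-gauged (multiplied by a common element out of each vertex) to restore the $eK$-convention. Routing through the nested sets is exactly what lets me avoid checking this re-gauging by hand: the normalization of (4)--(5) is merely a choice of canonical representative of each coset labelling and does not change the underlying subspace $H^K(\dots)$, so the transported forest is the canonical representative of the transported nested set, and functoriality is inherited. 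In other words, the order is a gauge, and the species structure lives intrinsically on the nested sets, where it is manifest.

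Finally, to make precise the claim that the computation is ``in the same spirit'' of Chapter~3 of \cite{Berg}, I would record the dictionary between the generating-function manipulations used above and the standard operations on species, whose effect on exponential generating series is classical. The factor $e^{s\lambda_H(t_H)}$ records the species of \emph{sets} of $H$-trees, i.e.\ the decomposition of an $H$-forest into its connected components; the product $\prod_{H\in\mathcal H}$ in \eqref{primaformula} is the product of species indexed by the distinct labels in $\mathcal H$; and each operator $\partial/\partial t_K$ entering $s_H$ is the species derivative (pointing/rooting), which implements the grafting of an $H$-tree onto a leaf of a $K$-tree with $H\subsetneq K$. With this dictionary $\tilde\Gamma$, $\overline\Gamma$ and $\mathcal{G}(s,t)$ are precisely the exponential generating series of species assembled by sum, product, derivative and set-composition, which is the framework developed in \cite{Berg}.
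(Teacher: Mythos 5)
The paper states this as a remark and offers no proof at all, so there is nothing to compare line by line; your proposal supplies the verification that the authors only gesture at, and it is correct. Your strategy is the natural one: everything in Sections 4 and 5 is order-free once $\{1,\dots,n\}$ is replaced by an arbitrary finite set $U$, the relabelling bijection $\sigma$ acts through the linear isomorphism $\sigma_*$ of $V^U$, which permutes the subspaces $H(i,j,g)$, hence preserves the intersection lattice, its irreducibles (since decomposability of an element of $\mathcal{C}_{\mathcal{G}}$ is a linear-algebraic condition preserved by the dual isomorphism of $(V^U)^*$), and the nested property; functoriality is then inherited from $(\tau\circ\sigma)_*=\tau_*\circ\sigma_*$. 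You also correctly isolate the only delicate point, which a careless argument directly on forests would miss: rules (4) and (5) normalize the edge cosets using the smallest leaf, so the forest description is order-dependent, and relabelling forces a re-gauging of coset representatives. Your resolution --- let the species structure live on the nested sets, where equivariance is manifest, and regard the forests of Section 5 as canonical representatives transported along the bijection --- is exactly right, and it is what makes the remark genuinely true rather than true only up to a noncanonical identification. One small emendation to your closing dictionary: the factor $\frac{1}{2-\Gamma(1,t)}-1=\sum_{l\geq 1}\bigl(\Gamma(1,t)-1\bigr)^{l}$ accounting for the chain of $G$-labelled vertices is the series of the species of nonempty \emph{linear orders} (lists) evaluated on the higher/lower forests, not a set-composition --- the $G$-vertices form an oriented path, so their order matters --- whereas the factors $e^{s\lambda_H(t_H)}$ are indeed set-compositions and $\partial/\partial t_K$ is the species derivative implementing the grafting; with that correction your account of why the computation matches Chapter 3 of \cite{Berg} is accurate.
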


\section{A formula for the  \(H\)-trees}
\label{secHtrees}
We conclude our  computation of \(\mathcal{G}(s,t)\) by providing  formulas for the series $\lambda_{H}(t_H)$, for every closed subgroup \(H\neq G\), in the case when \(G\) is abelian. For brevity of notation in this section  we will write  \(t\) instead than \(t_H\).

\begin{teo}
\label{formulaHtrees}
We have 
\[\lambda_H(t)=t+\int \left[\left (  \prod_{i\geq 2} e^{\frac{\frac{\partial}{\partial t}}{r}\frac{(2rt)^i}{i!}}\right )-1\right]\]

\end{teo}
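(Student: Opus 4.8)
The plan is to reduce the statement to a single functional equation for $\lambda_H$ coming from the recursive structure of $H$-trees, and then to recognise the displayed operator expression as the explicit solution of that equation. Throughout I write $r=[G:H]$ for the number of cosets of $H$ in $G$ (so $r$ is the number of admissible edge-labels), and I use that, since $G$ is abelian and $H\neq G$, in an $H$-tree every internal vertex is labelled $H$ and carries at least two descending edges \emph{unless} it is a pendant (a single leaf-child, which is permitted precisely because $H\neq\{e\}$); every edge is labelled by a coset of $H$, and by rule $(5)$ the edge towards the subtree containing the smallest leaf is forced to be $eH$.

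First I would set up the recursion. A child subtree hanging from the root is either a bare leaf (contributing $t$) or a proper $H$-tree (contributing $\lambda_H$), so a child is counted by $u:=t+\lambda_H$. Reading the root together with its set of children and using the classical device of marking the block that contains the smallest leaf — which for a set of components whose distinguished (minimal) member has series $A$ and whose remaining members have series $B$ produces $\int A'e^{B}\,dt$ — I take $A=u$ (the min-child, edge forced to $eH$) and $B=ru$ (each other child, $r$ free coset-labels). As the only forbidden one-child configuration is a single proper-tree child (a single leaf-child being the legitimate pendant), I subtract that contribution $\lambda_H$ and obtain $\lambda_H=\int (1+\lambda_H')\,e^{r(t+\lambda_H)}\,dt-\lambda_H$. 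Differentiating gives $2\lambda_H'=(1+\lambda_H')e^{ru}$, i.e. $u'(2-e^{ru})=2$, which separates and integrates (using $u(0)=0$) to the clean implicit equation
\[
t=u-\frac{e^{ru}-1}{2r},\qquad u=t+\lambda_H .
\]
I would cross-check the first coefficients ($\lambda_{H,1}=1$ from the pendant, $\lambda_{H,2}=4r$) against a direct hand enumeration, to be sure the degree/pendant bookkeeping and the factor $2$ — which records the leaf-or-pendant dichotomy at the bottom of the tree — are correct.

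It then remains to match the operator side. Differentiating the claimed formula gives $\lambda_H'=\prod_{i\ge 2}e^{\frac{1}{r}\frac{\partial}{\partial t}\frac{(2rt)^i}{i!}}\cdot 1$, and integrating back (with $\lambda_H(0)=0$, the constant term $1$ of the product yielding the leading $t$, i.e. the pendant) shows the theorem is equivalent to the single identity
\[
\prod_{i\ge 2}e^{\frac{1}{r}\frac{\partial}{\partial t}\frac{(2rt)^i}{i!}}\cdot 1=\lambda_H'(t)=\frac{e^{ru}}{\,2-e^{ru}\,}.
\]
My plan is to prove this by showing that the left-hand series $g:=\prod_{i\ge2}e^{B_i}\cdot 1$ satisfies the same first-order relation and initial value as $\lambda_H'$, namely $g(0)=1$ and $2g=(1+g)\,e^{r\left(t+\int_0^t g\,ds\right)}$, which by uniqueness of the power-series solution forces $g=\lambda_H'$. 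Here each factor must be read as $B_i=\tfrac1r\,\partial_t\!\big(\tfrac{(2rt)^i}{i!}\;\cdot\;\big)$ (multiply by $c_i=\tfrac{(2rt)^i}{i!}$ \emph{first}, then $\tfrac1r\partial_t$), the reading that makes $B_i\cdot 1\neq 0$; combinatorially $e^{B_i}$ is the grafting of internal vertices of out-degree $i$, where $\tfrac{(2rt)^i}{i!}$ creates an unordered vertex with $i$ slots, the weight $2r$ per slot records the $r$ coset-labels together with the leaf/pendant choice, and the outer $\tfrac1r\partial_t$ attaches the new edge to an existing leaf, exactly as $\tfrac{\partial}{\partial t_K}$ was used for attachments in (\ref{casobase}).

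The hard part will be controlling the \emph{non-commuting} product of operators. Because $B_i=\tfrac1r\,\partial_t\,M_{c_i}$ combines multiplication $M_{c_i}$ on the right with $\partial_t$ on the left, the factors $e^{B_i}$ do not commute and the product cannot be collapsed to $e^{\sum_i B_i}$; one must fix the order (smallest $i$ applied first) and verify that this order is exactly what makes each labelled $H$-tree arise once and only once under the grafting interpretation — equivalently, that the resulting $g$ obeys the relation above. I expect this operator-ordering verification, rather than the enumerative set-up, to carry the whole weight of the argument, since the functional equation itself is routine once the pendant case and the minimal-leaf edge-normalisation are correctly handled.
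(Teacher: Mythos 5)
Your first half is right, and it is a genuinely different route from the paper's. Decomposing an $H$-tree at its root (the child containing the smallest leaf has forced edge label $eH$ and series $u=t+\lambda_H$, the remaining children form a set each weighted by $r$ free coset labels, and one subtracts the forbidden configuration of a single proper-tree child) correctly yields $2\lambda_H'=(1+\lambda_H')e^{ru}$, hence $t=u-\frac{e^{ru}-1}{2r}$; your coefficient checks $\lambda_{H,1}=1$, $\lambda_{H,2}=4r$ are correct. The paper instead proceeds by a bijection between $H$-trees without one-leaf vertices and weighted set partitions (a tree with $L$ leaves and $k$ internal vertices corresponds to a partition of an $(L+k-1)$-set into $k$ blocks of size at least $2$, a block of size $i$ carrying $r^{i-1}$ coset labels); it computes the partition series with a \emph{commuting} variable $z$ marking blocks, so the product over $i$ collapses trivially, then replaces each monomial $z^k\frac{s^n}{n!}$ by $\partial_s^k\frac{s^n}{n!}=\frac{s^{n-k}}{(n-k)!}$, integrates, and finally restores the one-leaf vertices through $\lambda_H(t)=t+\overline{\lambda}_H(2t)$. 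Your implicit equation is equivalent to that answer (it is its Lagrange-reversion form), so up to this point you have a valid, self-contained characterization of $\lambda_H$.

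The second half is where the proposal breaks, and the problem is not that the operator-ordering verification is hard: the identity you plan to verify is false. With your reading $B_if=\frac{1}{r}\partial_t\left(\frac{(2rt)^i}{i!}f\right)$ and smallest $i$ applied first, one computes $\cdots e^{B_4}e^{B_3}e^{B_2}\cdot 1=1+4rt+16r^2t^2+56r^3t^3+\cdots$, whereas $\lambda_H'=1+4rt+16r^2t^2+\frac{208}{3}r^3t^3+\cdots$ (indeed $\lambda_{H,4}=2^4\cdot 26\,r^3=416r^3$, as your own functional equation confirms); the opposite order gives $\frac{200}{3}r^3t^3$, also wrong. So the series $g$ does not satisfy your differential relation, and no ordering rescues the grafting picture; note in particular that $\partial_t(c_if)=c_i'f+c_if'$ contains the term $c_i'f$ where the derivative hits the newly created vertex, so $B_i$ does not even implement ``attach a new vertex to an existing leaf.'' The misreading is that in the paper $\frac{\partial}{\partial t}$ is never an operator composed factor by factor: it is a placeholder for the commuting marker $z$, substituted monomial-wise at the very end, and this substitution has combinatorial meaning only through the tree--partition bijection, which your proposal never establishes. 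A further trap: even the commuting reading, taken literally with $(2rt)^i$ inside, gives $t+2rt^2+\frac{28}{3}r^2t^3+\cdots$ instead of the true $t+2rt^2+\frac{16}{3}r^2t^3+\cdots$, because the doubling does not commute with $\partial_t$ and $\int$; the displayed formula must be understood as ``compute $\overline{\lambda}_H$ in its own variable by the operator formula, then evaluate at $2t$ and add $t$,'' which is exactly what the paper's proof produces. If you want to finish along your own lines, drop the operator product and apply Lagrange reversion to $w(s)=s+\phi(w(s))$ with $\phi(s)=\frac{e^{rs}-1-rs}{r}$: it gives $w(s)-s=\sum_{k\geq 1}\frac{1}{k!}\partial_s^{k-1}\left[\phi(s)^k\right]=\overline{\lambda}_H(s)$, and your equation $t=u-\frac{e^{ru}-1}{2r}$ says precisely that $t+\lambda_H=w(2t)$, which is the theorem in its correct reading.
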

\begin{proof}
We first find a formula for the series $\lambda_{\{e\}}(t)=\lambda(t)$ mimicking  the strategy used in Theorem 5.1 of \cite{Ga1}. 

The $\{e\}-$trees are rooted, oriented  trees with labelled leaves and labelled edges. Every internal vertex different from the root is connected to at least three other vertices, while the root of the tree must have at least two outgoing edges.
If there are \(n\) leaves, their  labels are the elements of \(\{1,...,n\}\). 

 The labels of the edges are cosets of the subgroup $\{e\}$, i.e. elements of $G$. According to Lemma $\ref{lemav1}$, for every internal  vertex, we can consider one of its outgoing edges to be labelled with $e$.
 In particular we put the edge that connects the vertex with the subtree containing the smallest leaf to be always labelled with $e$. 
 We will denote the set of trees described above as $\mathcal{T}_{\{e\}}$.

 Our first goal is to put $\mathcal{T}_{\{e\}}$ in bijection  with a family of weighted partitions. This can be done using a slight modification of the bijection in  Theorem 2.1 of \cite{Ga3}. In the following  example we show  how a labelled partition is associated to a tree in $\mathcal{T}_{\{e\}}$.
 \begin{example}
Let us  consider the  tree   in 
\ref{figbis} (a). We label its internal vertices with the elements of  the set of integers $s=\{7,8,9, 10\}$,  as in picture \ref{figbis} (b) (see the green labels). The criterion we used is the following one: we associate to every internal vertex \(v\) the set (of the labels) of the leaves \(L_v\) of the subgraph stemming out of it. Then the label of a vertex \(v\) is less than the label of a vertex \(w\) if and only if either  \(L_v\) is included into \(L_w\) or the two sets are disjoint and \(min \ L_v<min \ L_w\). 

We are now ready to associate to the tree the  labelled partition obtained considering, for every internal vertex, the labels of the vertices connected to it by an outgoing  edge:
\[
\{1^e,2^{g'},3^g\}\{4^e,6^h\}\{5^k,8^e\}\{7^e,9^t\}.
\]
Notice that the number of elements  in the underlying set is 9, which is equal to the number of leaves of the tree plus the number of its internal vertices minus 1.
 \end{example}

\begin{figure}
\centering
  \begin{tikzpicture}[scale=.8]
     \draw (0,0) node[anchor=east]  {1};
     \draw (1,0) node[anchor=east]  {3};
     \draw (2,0) node[anchor=east]  {2};
     \draw (4,0) node[anchor=east]  {4};
     \draw (5.6,0) node[anchor=east]  {6};
     \draw (3.2,0) node[anchor=east]  {5};
     \draw (1,.7) node[anchor=east]  {g};     
     \draw (1.5,.75) node[anchor=west]  {g'};  
     \draw (1.83,2.4) node[anchor=east]  {e};
     \draw (0.5,.75) node[anchor=east]  {e};     
     \draw (3.25,2.95) node[anchor=west]  {t};
     \draw (3.6,1.3) node[anchor=east]  {k};
     \draw (4.4,2) node[anchor=west]  {e};
     \draw (4.4,.75) node[anchor=east]  {e};
     \draw (5.2,.75) node[anchor=west]  {h};
     
    \draw (2.75,-0.7) node[anchor=north]  {\textbf{(a)}};
    \draw (10.75,-0.7) node[anchor=north]  {\textbf{(b)}};         
     
     \draw (8,0) node[anchor=east]  {1};
     \draw (9,0) node[anchor=east]  {3};
     \draw (10,0) node[anchor=east]  {2};
     \draw (12,0) node[anchor=east]  {4};
     \draw (13.6,0) node[anchor=east]  {6};
     \draw (11.2,0) node[anchor=east]  {5};
     \draw (9,.7) node[anchor=east]  {g};     
     \draw (9.5,.75) node[anchor=west]  {g'};  
     \draw (9.83,2.4) node[anchor=east]  {e};
     \draw (8.5,.75) node[anchor=east]  {e};     
     \draw (11.25,2.95) node[anchor=west]  {t};
     \draw (11.6,1.3) node[anchor=east]  {k};
     \draw (12.4,2) node[anchor=west]  {e};
     \draw (12.4,.75) node[anchor=east]  {e};
     \draw (13.2,.75) node[anchor=west]  {h};     
     \draw (12,2.5) node[anchor=west]  {\textcolor{green}{9} };
    \draw (12.8,1.5) node[anchor=east]  {\textcolor{green}{8}};
     \draw (9,1.5) node[anchor=east]  {\textcolor{green}{7}};
     \draw (10.5,3.2) node[anchor=east]  {\textcolor{green}{10}};
\foreach \Point in {(8,0), (9,0),(10,0),(9,1.5), (11.2,0) , (12,2.5),(12.8,1.5),(12,0),(13.6,0), (10.5,3.2),
(0,0), (1,0),(2,0),(1,1.5), (3.2,0) , (4,2.5),(4.8,1.5),(4,0),(5.6,0), (2.5,3.2)}{
    \node at \Point {\textbullet};
}
    \draw[thick] (1,1.5) -- +(-1,-1.5);
    \draw[thick] (1,1.5) -- +(0,-1.5);
    \draw[thick] (1,1.5) -- +(1,-1.5);
    \draw[thick] (1,1.5) -- +(1.5,1.7);
    \draw[thick] (4,2.5) -- +(-0.8,-2.5);
    \draw[thick] (4,2.5) -- +(+0.8,-1);
    \draw[thick] (4.8,1.5) -- +(-0.8,-1.5);
    \draw[thick] (4.8,1.5) -- +(+0.8,-1.5);
    \draw[thick] (2.5,3.2) -- +(1.5,-0.7);
    \draw[thick] (9,1.5) -- +(-1,-1.5);
    \draw[thick] (9,1.5) -- +(0,-1.5);
    \draw[thick] (9,1.5) -- +(1,-1.5);
    \draw[thick] (9,1.5) -- +(1.5,1.7);
    \draw[thick] (12,2.5) -- +(-0.8,-2.5);
    \draw[thick] (12,2.5) -- +(+0.8,-1);
    \draw[thick] (12.8,1.5) -- +(-0.8,-1.5);
    \draw[thick] (12.8,1.5) -- +(+0.8,-1.5);
    \draw[thick] (10.5,3.2) -- +(1.5,-0.7);    

    \draw[thick] (-1,4) -- +(7.5,0);   
    \draw[thick] (-1,-0.5) -- +(7.5,-0);   
    \draw[thick] (-1,-0.5) -- +(0,4.5);   
    \draw[thick] (6.5,-0.5) -- +(0,4.5);       

    \draw[thick] (7,4) -- +(7.5,0);   
    \draw[thick] (7,-0.5) -- +(7.5,-0);   
    \draw[thick] (7,-0.5) -- +(0,4.5);   
    \draw[thick] (14.5,-0.5) -- +(0,4.5);
  \end{tikzpicture}
\caption{} 
\label{figbis} 
\end{figure}

Therefore we have to count partitions instead than trees. Let us denote by \(\mathcal P(n,k)\) the set of the above mentioned partitions with \(k\) parts and whose underlying set has cardinality \(n\). Then we  put \(p(t,z)=\sum_{n\geq 2, k\geq 1} |\mathcal P(n,k)|t^nz^k  \).

 We first  count the contributions to \(p(t,z)\)  provided the parts with  fixed cardinality \(i\geq 2\). If in a partition there is only one part of cardinality \(i\) its contribution to \(p(t,z)\)  is $(\frac{z}{r})(\frac{(rt)^i}{i!})$, where    \(r=|G|\); if there are \(j\) parts 
their contribution  is $(\frac{z}{r})^j\frac{(\frac{(rt)^i}{i!})^j}{j!}$.

Then putting together all the sizes \(i\geq 2\) we obtain    the  formula  \[ p(t,z)= \left ( \prod_{i\geq 2} e^{\frac{z}{r}\frac{(rt)^i}{i!}}\right ) -1 \]
 
 Now we observe that a tree with \(n\) leaves contributes to the coefficient of \(\frac{t^n}{n!}\)  in the exponential series \(\lambda(t)\), so  to obtain a formula for \(\lambda(t)\) it suffices to put  \(z=\frac{\partial}{\partial t}\) in the formula above for \(p(t,z)\) and perform a final integration:

\[
\lambda(t)= \int\left [ \left ( \prod_{i\geq 2} e^{\frac{\frac{\partial}{\partial t}}{r}\frac{(rt)^i}{i!}}\right )-1 \right ]
\]

%
Now we can compute the series $\lambda_H(t)$ for all the  closed subgroups \(H\) different from \(\{e\}\) and \(\{G\}\). Unlike the case $H=\{e\}$ there may be internal vertices with only one outgoing edge. In particular the trees must satisfy:
\begin{itemize}\label{alberi}
  \item If an internal vertex is connected to only two vertices, then one of these two vertices must be a leaf. We will call these vertices {\em one leaf vertices}. We can also have the case of the {\em one leaf tree}, with only the root and one leaf.
  \item All the edges are labelled with cosets of $H$. Among all the edges stemming from a node, the one connecting the vertex to the subtree with the smallest leaf is labelled with $eH$.
\end{itemize}

First we consider the set $\mathcal{T}_H$ of $H$-trees with no one leaf vertices and we compute its exponential series $\overline{\lambda}_H(t)$. Now we observe that if we forget the labels of the edges, the only difference between a tree in $\mathcal{T}_H$ and a tree in $\mathcal{T}_{\{e\}}$ is that the labels of the vertices are all equal to \(H\) instead than being all equal to \(\{e\}\).
 
So as in the case $H=\{e\}$ we can put $\mathcal{T}_H$ in bijection with a set of partitions. 
We obtain partitions of the type 
\(\{1^{eH},2^{g'H},3^{gH}\}\{4^{eH},6^{hH}\}\{5^{kH},8^{eH}\}\{7^{eH},9^{tH}\}\): the only difference  with the $\mathcal{T}_{\{e\}}$ case is that now  the labels are provided by the labels  of the edges of the trees in $\mathcal{T}_H$, so they belong to  $G/H$. 
This allows us to write
\[
\overline{\lambda}_H(t)= \int \left[\left (  \prod_{i\geq 2} e^{\frac{\frac{\partial}{\partial t}}{r}\frac{(rt)^i}{i!}}\right )-1\right ]
\]
where now $r=|\frac{G}{H}|$.
To count the number of $H-$trees we observe that  each $H-$tree can be seen  as a tree in  $\mathcal{T}_{H}$  with possibly some one leaf vertices glued to some of the leaves.
From a tree in  $\mathcal{T}_{H}$ with \(i\) leaves we can obtain \(\binom {i}{k}\) different \(H\)-trees with exactly \(k\) one leaf vertices. So a tree in  $\mathcal{T}_{H}$ with \(i\) leaves produces $\sum_{k=0}^{i}\binom{i}{k}=2^i$ different $H-$trees. Assuming that $\overline{\lambda}_H(t)=\sum_{i\geq 2} \frac{a_it^i}{i!}$, we have that 
\[
\lambda_H(t)=t+\sum_{i\geq 2} \frac{a_it^i}{i!}2^i=t+\overline{\lambda}_{H}(2t).
\]
This gives the formula in the statement (the addendum  $t$ appears to take into account   the one leaf tree).
\end{proof}
\begin{example}
Let $G=\mathbb{Z}/2\mathbb{Z}\times \mathbb{Z}/2\mathbb{Z}$ and $\rho$ the following faithful group action:
\begin{align*}
\rho:G&\longrightarrow GL(\mathbb{C}^2)\\
(1,0)&\longrightarrow  \begin{bmatrix}
-1 & 0 \\
0 & 1 
\end{bmatrix}\\
(0,1)&\longrightarrow  \begin{bmatrix}
1& 0 \\
0 & -1
\end{bmatrix}
\end{align*}
The closed subgroups associated to $\rho$ are $\{e\}$, $H_1=<(1,0)>$, $H_{2}=<(0,1)>$ and $G$, in particular $<(1,1)>$ is not a closed subgroup. Therefore we have  three different series: $\lambda_{\{e\}}(t)$, $\lambda_{H_1}(t)$ and $\lambda_{H_2}(t)$. We notice that $\lambda_{H_2}(t)=\lambda_{H_1}(t)$ because they have the same parameter $r=2$ while for $\lambda_{\{e\}}(t)$ $r=4$. 
 Then  $\tilde{\Gamma}(s,t_{\{e\}}, t_{H_1},t_{H_2})=e^{s_{\{e\}}\lambda_{\{e\}}{(t_{\{e\}})}}e^{s_{H_1}\lambda_{H_1}{(t_{H_1})}}e^{s_{H_2}\lambda_{H_2}{(t_{H_2})}}$ and  $\tilde{\Gamma}(s,t)$ is its evaluation obtained putting  \(t=t_{\{e\}}=t_{H_1}=t_{H_2}\). Thus \(\Gamma(s,t)= e^{st} \tilde{\Gamma}(s,t)\) is the main ingredient in the formula of Theorem \ref{teogst}.
\end{example}

\addcontentsline{toc}{section}{References}
\bibliographystyle{acm}
\bibliography{Bibliogpre}

\end{document}